\numberwithin{equation}{section}
\newtheorem{teo}{Theorem}[section]
\newtheorem{lemma}[teo]{Lemma}
\newtheorem{prop}[teo]{Proposition}
\newtheorem{rem}[teo]{Remark}
\numberwithin{equation}{section}
\newcommand\R{\mathbb R}
\newcommand\mbb\mathbb
\newcommand\mbf\mathbf
\newcommand\mcal\mathcal
\newcommand\mfrak\mathfrak
\newcommand\mrm\mathrm
\newcommand\msf\mathsf
\renewcommand\a\alpha
\renewcommand\b\beta
\newcommand\g\gamma
\newcommand\G\Gamma
\renewcommand\d\delta
\newcommand\D\Delta
\newcommand\e\varepsilon
\newcommand\z\zeta
\renewcommand\t\theta
\newcommand\Th\Theta
\newcommand\la\lambda
\newcommand\La\Lambda
\newcommand\s\sigma
\newcommand\si\varsigma
\newcommand\Si\Sigma
\newcommand\ups\upsilon
\newcommand\U\Upsilon
\newcommand\ph\varphi
\renewcommand\o\omega
\renewcommand\O\Omega
\newcommand\wt\widetilde
\newcommand\wh\widehat
\newcommand\ol\overline
\newcommand\ul\underline
\newcommand\mr\mathring
\newcommand\ub\underbrace
\newcommand\pa\partial
\newcommand\n\nabla
\newcommand\fa\forall
\newcommand\ex\exists
\newcommand\es\emptyset
\newcommand\wk\rightharpoonup
\newcommand\inc\hookrightarrow
\newcommand\linf\varliminf
\newcommand\lsup\varlimsup
\newcommand\os\overset
\newcommand\us\underset
\newcommand\sr\stackrel
\newcommand\Ot\Leftarrow
\newcommand\To\Rightarrow
\newcommand\map\mapsto
\newcommand\ot\leftarrow
\newcommand\lot\longleftarrow
\newcommand\lto\longrightarrow
\newcommand\tot\leftrightarrow
\newcommand\ltot\longleftrightarrow
\newcommand\sm\backslash
\renewcommand\Cup\bigcup
\renewcommand\Cap\bigcap
\newcommand\sub\subset
\newcommand\Sub\Subset
\newcommand\sne\subsetneq
\newcommand\bus\supset
\newcommand\Bus\Supset
\newcommand\eq\equiv
\newcommand\ox\otimes
\newcommand\Ox\bigotimes
\newcommand\pl\oplus
\newcommand\Pl\bigoplus
\newcommand\x\times
\renewcommand\c\circ
\newcommand\q\quad
\renewcommand\l\left
\renewcommand\r\right
\newcommand\fr\frac
\begin{document}

\title[Critical points of positive solutions]
{The spectral gap to torsion problem
for some non-convex domains}
\author[H. Chen and P. Luo]{Hua Chen and  Peng Luo}

\address[Hua Chen]{School of Mathematics and Statistics, Wuhan University, Wuhan 430072,  China}
 \email{chenhua@whu.edu.cn}
 \address[Peng Luo]{School of Mathematics and Statistics, Central China Normal University, Wuhan 430079,  China }
 \email{pluo@mail.ccnu.edu.cn}

\begin{abstract}
In this paper we study the following torsion problem
\begin{equation*}
\begin{cases}
-\Delta u=1~&\mbox{in}\ \Omega,\\[1mm]
u=0~&\mbox{on}\ \partial\Omega.
\end{cases}
\end{equation*}
Let $\Omega\subset \R^2$ be a bounded, convex domain and $u_0(x)$ be the solution of above problem with  its maximum $y_0\in \Omega$.  Steinerberger \cite{Ste18} proved that there are universal constants $c_1, c_2>0$ satisfying
\begin{equation*}
\lambda_{\max}\left(D^2u_0(y_0)\right)\leq -c_1\mbox{exp}\left(-c_2\frac{\text{diam}(\Omega)}{\mbox{inrad}(\Omega)}\right).
\end{equation*}
And in \cite{Ste18} he proposed following open problem:

\vskip 0.2cm

\noindent \emph{``Does above result hold true on domains that are not convex but merely simply connected or perhaps only bounded? The proof uses convexity of the domain $\Omega$ in a very essential way and it is not clear to us whether the statement remains valid in other settings."}

\vskip 0.2cm

\noindent Here by some new idea involving the computations on Green's function, we compute
the spectral gap $\lambda_{\max}D^2u_0(y_0)$
for some non-convex smooth bounded domains, which gives a negative answer to above open problem.
\end{abstract}

\date{\today}
\maketitle

\keywords {\noindent {\bf Keywords:} {Spectral gap, torsion problem, Green's function}
\smallskip

\subjclass{\noindent {\bf 2010 Mathematics Subject Classification:}  35B09 $\cdot$ 35J08 $\cdot$ 35J60}}
\section{Introduction and main results}\label{s0}
\setcounter{equation}{0}

In this paper, we consider  the following   torsion problem
 \begin{equation}\label{1h}
\begin{cases}
-\Delta u=1~&\mbox{in}\ \Omega,\\[1mm]
u=0~&\mbox{on}\ \partial \Omega.
\end{cases}
\end{equation}
Problem \eqref{1h} is a classical topic
in PDEs, with references dating back to St. Venant(1856).
From then, many results are devoted to analysis the
qualitative properties of the positive solutions.
A very interest problem is the location and the number of the critical points of above positive solutions. This is related with the level sets of the positive solutions. For a more general case, the following nonlinear problem
 \begin{equation*}
\begin{cases}
-\Delta u=f(u)~&\mbox{in}\ \Omega,\\[1mm]
u=0~&\mbox{on}\ \partial \Omega.
\end{cases}
\end{equation*}
has also been considered widely. For example, one can refer to \cite{BL1976,GG2019,G2020,K1985,LR2019,ML71} and the related references.

A well-known and seminal result is the fundamental theorem in
Gidas, Ni and Nirenberg \cite{GNN1979} by moving plane.
Gidas-Ni-Nirenberg's Theorem
 shows that the uniqueness of the critical points is related to the shape of the superlevel sets.
Although there are some conjectures on the uniqueness of the critical point in more general convex domains, this seems to be a very difficult problem. And another important result is \cite{CC1998}, which  holds for a wide class of nonlinearities $f$ without the symmetry assumption on $\Omega$ and for semi-stable solutions. For further results, we can refer to \cite{B2018,HNS2016,M2016} and references therein.

When $f(u)\equiv 1$, the torsion function  seems to be the classical object in the study of level sets of elliptic equations. First from \cite{ML71}, we know that the level sets are convex and there is a unique global maximum of the
torsion function on planar convex domains.
And then the eccentricity of the level sets close to the (unique) maximum point $y_0$ is determined by the eigenvalues of the Hessian $D^2u(y_0)$.
Let $\lambda_1$ and $\lambda_2$ are two eigenvalues of $D^2u(y_0)$, then directly, we have
\begin{equation*}
\lambda_1,\lambda_2\leq 0~\mbox{and}~\lambda_1+\lambda_2=\mbox{tr}~D^2u(y_0)=\Delta u(y_0)=-1.
\end{equation*}
This gives us that the level sets will be highly eccentric if one of the two eigenvalues is close to $0$.  In this aspect, Steinerberger \cite{Ste18} gave a beautiful description, which shows that the level sets aren't highly eccentric  for any convex domain $\Omega$ and can be stated as follows.

\vskip 0.2cm

\noindent \textbf{Theorem A.} \emph{Let $\Omega\subset \R^2$ be a bounded,  convex domain and $u_0(x)$ be the solution of problem \eqref{1h} with  its maximum $y_0\in \Omega$. There are universal constants $c_1, c_2>0$ such that}
\begin{equation}\label{5-10-1}
\lambda_{\max}\left(D^2u_0(y_0)\right)\leq -c_1\mbox{exp}\left(-c_2\frac{\text{diam}(\Omega)}{\mbox{inrad}(\Omega)}\right).
\end{equation}
Also Steinerberger \cite{Ste18} gave some details to show that the above result has the sharp scaling. Above Theorem A was proved by Fourier analysis in \cite{Ste18} and highly depends on the convexity of the domain $\Omega$. Next at page 1616 of \cite{Ste18}, Steinerberger proposed the following \textbf{open problem}:

 \vskip 0.2cm

\noindent \textbf{Problem A}. Convexity of the Domain. \emph{Does Theorem A also hold true on domains that are not convex but merely simply connected or perhaps only bounded? The proof uses convexity of the domain $\Omega$ in a very essential way and it is not clear to us whether the statement remains valid in other settings.}

 \vskip 0.2cm

In this paper, we devote to give some answer to above Problem A.
To study Problem A, we will compute the Hessian of the torsion function at the maximum point on a simple non-convex domain.
For example, we suppose that $\Omega_\varepsilon=\O\backslash B(x_0,\varepsilon)$ with $x_0\in\O$ and $B(x_0,\varepsilon)$ denote the ball centered at $x_0$ and radius $\e$, $u_\e$
is the solution of
 \begin{equation}\label{aa2}
\begin{cases}
-\Delta u=1~&\mbox{in}\ \O_\varepsilon,\\[1mm]
u=0~&\mbox{on}\ \partial\O_\varepsilon.
\end{cases}
\end{equation}
And then we have following result.
\begin{teo}\label{th1.1}
Let $\Omega\subset \R^2$ be a bounded and convex domain, $y_0$ is the maximum point  of $u_0(x)$ as in Theorem A.
Suppose that $u_\e(x)$ is the  solution of problem \eqref{aa2}
with its maximum $x_\e\in\Omega_\e$. Let  $\lambda_1$ and $\lambda_2$ be two eigenvalues of $D^2u_0(x)$ at $y_0$, then
\begin{equation*}
 \lim_{\e\to 0}\lambda_{\max}\big(D^2u_\e(x_\e)\big)=
 \begin{cases}
 \max\big\{\lambda_1,\lambda_2\big\} &\mbox{if} ~x_0\neq y_0,\\[2mm]
 \max\big\{\lambda_1,\lambda_2,-|\lambda_2-\lambda_1|\big\} &\mbox{if}~x_0= y_0.
 \end{cases}
\end{equation*}
\end{teo}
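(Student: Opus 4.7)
The plan is to compare $u_\e$ with $u_0$ via the Green's function of the undisturbed domain $\Omega$, and then localize the analysis near the hole. Write $G(x,y)=-\tfrac{1}{2\pi}\log|x-y|+H(x,y)$ for the Green's function of $\Omega$, with $H$ its regular part. Since $-\Delta u_\e=-\Delta u_0=1$ in $\Omega_\e$, the difference $v_\e:=u_0-u_\e$ is harmonic in $\Omega_\e$, vanishes on $\partial\Omega$, and equals $u_0$ on $\partial B(x_0,\e)$. Because $G(\cdot,x_0)$ is itself harmonic in $\Omega_\e$, vanishes on $\partial\Omega$, and takes the approximately constant value $D_\e:=\tfrac{|\log\e|}{2\pi}+H(x_0,x_0)$ on $\partial B(x_0,\e)$ (with error $O(\e)$), the natural capacitary ansatz
\[
v_\e(x)=\alpha_\e\,G(x,x_0)+\rho_\e(x),\qquad\alpha_\e:=\frac{u_0(x_0)}{D_\e},
\]
produces a harmonic correction $\rho_\e$ with $O(\e)$ boundary data on $\partial B(x_0,\e)$; a maximum-principle plus Poisson-kernel argument shows that $\rho_\e$ together with its first two derivatives is negligible at all scales much larger than $\e$.

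When $x_0\ne y_0$ the argument is short: $\alpha_\e\to 0$ and $G(\cdot,x_0)$ is smooth on a fixed neighborhood $U$ of $y_0$, so $u_\e\to u_0$ in $C^2(U)$; the non-degenerate interior maximum $y_0$ of $u_0$ persists, giving $x_\e\to y_0$ and $D^2u_\e(x_\e)\to D^2u_0(y_0)$. When $x_0=y_0$ one chooses coordinates so that $y_0=0$ and $D^2u_0(y_0)=\operatorname{diag}(\lambda_1,\lambda_2)$, then substitutes the Taylor expansions $u_0(z)=u_0(0)+\tfrac12(\lambda_1 z_1^2+\lambda_2 z_2^2)+O(|z|^3)$ and $G(z,0)=-\tfrac{1}{2\pi}\log|z|+H(0,0)+O(|z|)$ into $u_\e=u_0-\alpha_\e G-\rho_\e$, obtaining, modulo constants and higher-order terms,
\[
u_\e(z)\;\sim\;\tfrac12(\lambda_1 z_1^2+\lambda_2 z_2^2)+\tfrac{\alpha_\e}{2\pi}\log|z|.
\]
The rescaling $z=\sqrt{\alpha_\e}\,\xi$ reduces the critical-point equation, at leading order, to $\xi_i\bigl(\lambda_i+1/(2\pi|\xi|^2)\bigr)=0$ for $i=1,2$. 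For $\lambda_1\ne\lambda_2$ the non-trivial solutions lie on the coordinate axes; comparing the values of $u_\e$ at these candidate critical points identifies the global maximum as the one on the axis corresponding to the eigenvalue closer to zero, and differentiating the logarithm twice at that point produces the limiting Hessian, one of whose eigenvalues is exactly $-|\lambda_2-\lambda_1|$ (the transverse direction). The degenerate case $\lambda_1=\lambda_2$ is handled separately: the critical set collapses to a circle, on which the transverse eigenvalue equals $0=-|\lambda_2-\lambda_1|$. Combining the two regimes $x_0\ne y_0$ and $x_0=y_0$ and passing to $\lambda_{\max}$ yields the formula $\max\{\lambda_1,\lambda_2,-|\lambda_2-\lambda_1|\}$.

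The hard part will be making the above asymptotic analysis rigorous at the scale $|z|\sim\sqrt{\alpha_\e}\sim 1/\sqrt{|\log\e|}$, at which $u_\e$ is \emph{not} a small $C^2$-perturbation of $u_0$ and the ingredients $\alpha_\e G$, $\rho_\e$, and the cubic tail of $u_0$ all contribute at comparable orders in different subregions. Concretely one must produce sharp $C^2$-bounds on $\rho_\e$ uniformly on the annulus $\e\le|z|\le\sqrt{\alpha_\e}$, control the cubic tail of $u_0$ and the linear tail of $H$ at this scale, and carry out a uniqueness argument (based on the strict concavity of the rescaled leading-order profile $\tfrac12(\lambda_1\xi_1^2+\lambda_2\xi_2^2)+\tfrac{1}{2\pi}\log|\xi|$ near each of its axis-located maxima) that excludes any competing local maximum of $u_\e$ produced by the lower-order corrections; only then does one upgrade $x_\e\to y_0$ and $D^2u_\e(x_\e)\to(\text{limiting matrix})$ from a subsequential to a genuine limit.
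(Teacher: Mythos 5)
Your decomposition is genuinely different from the paper's. You write $u_0-u_\e=\alpha_\e G(\cdot,x_0)+\rho_\e$ with a one-term capacitary ansatz $\alpha_\e=u_0(x_0)/D_\e$ and a harmonic remainder $\rho_\e$ controlled by the maximum principle plus interior gradient estimates; the paper instead applies the Green representation formula on $\Omega_\e$ directly, writing $u_\e=u_0+K_\e+L_\e$ where $K_\e$ is an integral against the Green's function of $\R^2\setminus B(0,1)$ (computed in closed form via the Poisson kernel) and $L_\e$ is the boundary correction, estimated through the auxiliary Lemma~\ref{G3}. Both routes then localize near $x_0$ at the scale $|x-x_0|\sim 1/\sqrt{|\log\e|}$ and reduce, at leading order, to the profile $\tfrac12(\lambda_1 z_1^2+\lambda_2 z_2^2)+\tfrac{\alpha_\e}{2\pi}\log|z|$; your rescaled critical-point equation $\xi_i(\lambda_i+1/(2\pi|\xi|^2))=0$ is the same equation the paper extracts in \eqref{s11-16-01}. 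Your one-term ansatz is leaner (it avoids the explicit $G_0$ computations and the splitting $L_\e=L_{\e,1}+L_{\e,2}$), but it buys nothing over the paper's route in terms of precision; both require the same careful $C^2$-control on the remainder at the scale $1/\sqrt{|\log\e|}$, which you explicitly acknowledge as the hard step rather than carrying it out. Your argument that $\rho_\e=O(\e)$ in $L^\infty$ and hence $O(\e|\log\e|)$ in $C^2$ at that scale is correct, as is your identification of the maximizing axis via the value comparison (the eigenvalue closer to zero wins), which plays the role of the paper's saddle-versus-max discussion around \eqref{ll5}.

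There is, however, a concrete gap in your last step. You say differentiating $\tfrac{\alpha_\e}{2\pi}\log|z|$ twice at the critical point produces a limiting Hessian ``one of whose eigenvalues is exactly $-|\lambda_2-\lambda_1|$,'' and then immediately assert the theorem's formula. But you never state the other eigenvalue, and if you compute it you find that it is $2\lambda$, not $\lambda$ (here $\lambda=\max\{\lambda_1,\lambda_2\}$). Indeed at $z=(z_1,0)$ with $z_1^2=-\alpha_\e/(2\pi\lambda_1)$ one has $\partial_{11}\bigl(\tfrac{\alpha_\e}{2\pi}\log|z|\bigr)=-\alpha_\e/(2\pi z_1^2)=\lambda_1$, hence $\partial_{11}u_\e\to\lambda_1+\lambda_1=2\lambda_1$, while $\partial_{22}u_\e\to\lambda_2-\lambda_1$. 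The trace constraint confirms this: $\Delta u_\e\equiv-1$ forces the two limiting eigenvalues to sum to $\lambda_1+\lambda_2=-1$, which $2\lambda_1$ and $\lambda_2-\lambda_1$ do and which $\lambda_1$ and $\lambda_2-\lambda_1$ do not. The annulus case $\Omega=B(0,1)$, $x_0=0$ settles it explicitly: $u_\e(r)=\tfrac14+\tfrac{\e^2-1}{4\log\e}\log r-\tfrac{r^2}{4}$ has its maximum at $r_*^2=(1-\e^2)/(2|\log\e|)$ with $u_\e''(r_*)=-1$ and tangential eigenvalue $0$, i.e.\ eigenvalues $\{-1,0\}=\{2\lambda_1,\lambda_2-\lambda_1\}$, not $\{-1/2,0\}$. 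So the correct limiting spectrum is $\{2\lambda,\lambda'-\lambda\}$ with $\lambda=\max\{\lambda_1,\lambda_2\}$, $\lambda'=\min\{\lambda_1,\lambda_2\}$, and $\lambda_{\max}$ limits to $\max\{2\lambda,\lambda'-\lambda\}$; this coincides with $\max\{\lambda_1,\lambda_2,-|\lambda_2-\lambda_1|\}$ in the degenerate case $\lambda_1=\lambda_2$ (both equal $0$, which is all the Remark after Theorem~\ref{th1.1} actually uses), but not in general (e.g.\ $\lambda_1=-0.3$, $\lambda_2=-0.7$ gives $-0.4$ versus $-0.3$). You cannot repair this by more careful bookkeeping on your side: the discrepancy comes from the displayed theorem itself, which tracks back in the paper's own proof to a dropped factor of $2$ in \eqref{s11-15-01a} (the Hessian of $\log|x-x_0|$ contributes $\delta_{ij}-2(x_i-x_{0,i})(x_j-x_{0,j})/|x-x_0|^2$, as the paper's own \eqref{aamt11-23-05} shows, not $\delta_{ij}-(x_i-x_{0,i})(x_j-x_{0,j})/|x-x_0|^2$). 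So the gap in your proposal is that it asserts but does not derive the stated formula, and an honest completion of your own computation would show the stated formula to be incorrect in the case $x_0=y_0$, $\lambda_1\ne\lambda_2$.
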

\begin{rem}
Taking $\O\subset\R^2$ a bounded and convex domain,
$\Omega_\varepsilon=\O\backslash B(x_0,\varepsilon)$ with $x_0=y_0$ and $\varepsilon$ small, if we suppose that \eqref{5-10-1} is true for $\Omega_\e$, then there exist two positive constants $c_3$ and $c_4$, which is independent with $\e$, such that
\begin{equation}\label{a5-10-1}
\lambda_{\max}\left(D^2u_\e(x_\e)\right)\leq -c_1 \exp\left(-c_2\frac{\text{diam}(\Omega_\e)}{\mbox{inrad}(\Omega_\e)}\right)
\leq -c_3\exp\left(-c_4\frac{\text{diam}(\Omega)}{\mbox{inrad}(\Omega)}\right).
\end{equation}
On the other hand,
moreover if we suppose $\lambda_1=\lambda_2$ (for example $\Omega=B(0,1)$),
then
  Theorem \ref{th1.1} gives us
  \begin{equation*}
 \lim_{\e\to 0}\lambda_{\max}\big(D^2u_\e(x_\e)\big)=0,
\end{equation*}
which is a contradiction with \eqref{a5-10-1}.
Hence we deduce that $\eqref{5-10-1}$ doesn't hold for above non-convex domain $\Omega_\e$, which  gives a negative answer to above Problem A in \cite{Ste18}.  And then in this case, we find that the level sets of the  torsion function  are highly eccentric.
\end{rem}
\begin{rem} Our crucial  ideas are as follows.
To compute  the eigenvalues of the Hessian of $u_\e(x)$ at the maximum point on $\Omega_\e$, a first step is to find the location of the maximum point $x_\e$. And then we need to analyze the asymptotic
behavior of $D^2 u_\e(x_\e)$.
It is well known that  $u_0(x)$ and $u_\e(x)$ are represented by corresponding Green's function. Hence we write  $u_\e(x)$ by the basic Green's function and then analyze the properties of Green's function on $\Omega_\e$.
To be specific, we will establish the basic estimate near $\partial B(x_0,\e)$:
\begin{equation*}
u_\varepsilon(x)=
 u_0(x) + \frac{\log  |x-x_0| }{|\log \e|} \Big(u_0\big(x_0\big)+o(1)\Big)+o\big(1\big).
\end{equation*}
Furthermore, another crucial result is to derive that $u_\e(x)$ and $u_0(x) + \frac{\log  |x-x_0| }{|\log \e|} u_0(x_0)$ are close in the $C^2$-topology  in $B(x_0,d)\setminus B(x_0,\e)$ for some small fixed $d>0$, which can be found in Proposition \ref{alemma-2} below.
\end{rem}

\begin{rem}
Now we would like to point out that $\Omega_\e$ in Theorem \ref{th1.1} can be replaced by $$\Omega'_\e=\Omega\backslash A_\e~\mbox{with}~A_\e=\e \big(A-x_0\big)+x_0~\mbox{and}~x_0\in A\cap \Omega,$$
where $A$ is a convex domain in $\R^2$ and $A-x_0=\{x, x+x_0\in A\}$. Since this is not essential, we omit the details.
\end{rem}

\begin{rem}
We point out one possible application in
the study of Brownian motion, which is also stated in \cite{Ste18}: we recall that the torsion function $u(x)$ also describes the expected lifetime of Brownian motion $\omega_x(t)$ started in $x$ until it first touches the boundary. If one moves away from the point in which lifetime is maximized, then the expected lifetime in a neighborhood is determined by the eccentricity of the level set.
\end{rem}

The paper is organized as follows.  In Section \ref{s1}, we recall some properties of the Green's function and split our solution $u_\e$ in different parts which will be estimated
in the next section. In Section \ref{s3}, we compute the terms $u_\e$, $\nabla u_\e$ and $\nabla^2 u_\e$.
Section \ref{s9} is devoted to the proof of Theorem \ref{th1.1}.

\section{Properties of the Green's function and splitting of the solution $u_\varepsilon$}\label{s1}

First we recall that, for $(x,y)\in\O\times\O$, $x\ne y$, the Green's function $G(x,y)$ verifies
\begin{equation*}
\begin{cases}
-\D_x G(x,y)=\delta(y)&\hbox{in }\O,\\[1mm]
G(x,y)=0&\hbox{on }\partial\O,
\end{cases}
\end{equation*}
in the sense of distribution. Next we recall the classical representation formula,
\begin{equation}\label{G}
G(x,y)=-\frac{1}{2\pi}\log \big|x-y\big|-H(x,y),
\end{equation}
where  $H(x,y)$ is the {\em regular part of the Green's function}.
Since in the paper we need to consider the Green's function in different domains, we would like to denote by $G_U(x,y)$  as  the Green's function on $U$. And we have following facts on the harmonic function which can be found in \cite{GT1983}.
\begin{lemma}\label{lem2-3}
Let $u(x)$ be a $harmonic$ function in $U\subset\subset \R^2$, then
\begin{equation}\label{a11-02-01}
\big|\nabla u(x)\big|\leq \frac{2}{r}\sup_{\partial B(x,r)} |u|,~\, \mbox{for}\,~B(x,r)\subset\subset U.
\end{equation}
\end{lemma}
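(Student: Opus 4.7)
The plan is to reduce the gradient bound to the mean value property for harmonic functions combined with the divergence theorem. The key observation is that if $u$ is harmonic on $U$, then every directional derivative of $u$ is again harmonic on $U$, so the mean value property applies to it directly.

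First, I would pick a unit vector $e\in\R^2$ realizing $|\nabla u(x)|=e\cdot\nabla u(x)$, so that it suffices to bound the harmonic function $v(y):=e\cdot\nabla u(y)=\mathrm{div}\bigl(u(y)\,e\bigr)$ at the point $x$. By the mean value property on $B(x,r)\Sub U$,
\begin{equation*}
v(x)=\frac{1}{\pi r^{2}}\int_{B(x,r)} \mathrm{div}\bigl(u(y)\,e\bigr)\,dy.
\end{equation*}
Applying the divergence theorem converts the volume integral into a boundary integral:
\begin{equation*}
v(x)=\frac{1}{\pi r^{2}}\int_{\partial B(x,r)} u(y)\,\bigl(e\cdot \nu(y)\bigr)\,d\sigma(y),
\end{equation*}
where $\nu$ is the outward unit normal on $\partial B(x,r)$.

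Second, since $|e\cdot\nu(y)|\le 1$ and $|\partial B(x,r)|=2\pi r$, a direct estimate yields
\begin{equation*}
|\nabla u(x)|=|v(x)|\le \frac{1}{\pi r^{2}}\cdot 2\pi r\cdot \sup_{\partial B(x,r)} |u|=\frac{2}{r}\sup_{\partial B(x,r)} |u|,
\end{equation*}
which is exactly \eqref{a11-02-01}. The only point requiring a small justification is the use of the mean value property for $v$ and the divergence theorem, both of which are legitimate since $u\in C^{\infty}(U)$ by elliptic regularity of $\Delta u=0$ and $\overline{B(x,r)}\subset U$.

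There is essentially no obstacle here; the argument is the classical derivation and is why the authors simply cite \cite{GT1983}. The only subtle choice is to write $e\cdot\nabla u$ as the divergence of $u\,e$ (valid because $e$ is a constant vector), which is what makes the mean value property collapse neatly onto $\partial B(x,r)$ with a sharp constant. A completely equivalent route would be to differentiate the Poisson integral representation of $u$ on $B(x,r)$ and estimate the Poisson kernel's derivative, but the divergence-theorem route gives the constant $2/r$ with one line of computation.
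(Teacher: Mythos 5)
Your argument is correct and complete: it is precisely the classical gradient estimate for harmonic functions (mean value property applied to the harmonic function $e\cdot\nabla u=\mathrm{div}(u\,e)$, then the divergence theorem), which is exactly how the cited reference \cite{GT1983} proves the bound that the paper invokes without proof. The constant $\tfrac{2}{r}$ comes out correctly from $\tfrac{1}{\pi r^2}\cdot 2\pi r$ in dimension two, so nothing further is needed.
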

\begin{lemma}[Green's representation formula]\label{lem2-a}
If $u\in C^2(\overline{U})$, then it holds
\begin{equation}\label{grf}
u(x)=-\int_{\partial U}u(y)\frac{\partial G_U(x,y)}{\partial \nu_y}d\sigma(y)
-\int_{U} \Delta u(y) G_U(x,y)dy,~\, \mbox{for}~x\in U,
\end{equation}
where $\nu_y$ is the outer normal vector on $\partial U$.
\end{lemma}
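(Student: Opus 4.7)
The plan is to derive the identity \eqref{grf} as the $\e \to 0$ limit of Green's second identity applied on the punctured domain $V_\e := U \setminus \overline{B(x,\e)}$, exploiting the logarithmic singularity of $G_U(x,\cdot)$ at $y = x$ to pick up the pointwise value $u(x)$ in the limit.

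Fix $x \in U$ and choose $\e > 0$ so small that $\overline{B(x,\e)} \subset U$. On $V_\e$, both $u$ and $y \mapsto G_U(x,y)$ are $C^2$, and the latter is harmonic. I would apply Green's second identity:
\begin{equation*}
\int_{V_\e}\bigl(G_U(x,y)\Delta u(y) - u(y)\Delta_y G_U(x,y)\bigr)\,dy
= \int_{\partial V_\e}\bigl(G_U\,\partial_\nu u - u\,\partial_\nu G_U\bigr)\,d\sigma,
\end{equation*}
where $\nu$ denotes the outer unit normal to $V_\e$. The second term on the left-hand side vanishes since $G_U(x,\cdot)$ is harmonic on $V_\e$.

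Next, I would split $\partial V_\e = \partial U \cup \partial B(x,\e)$. On $\partial U$, the Dirichlet condition $G_U(x,y)=0$ kills the $G_U\,\partial_\nu u$ integrand and the outer normal $\nu$ coincides with $\nu_y$, leaving exactly $-\int_{\partial U} u(y)\,\partial_{\nu_y} G_U(x,y)\,d\sigma(y)$. On $\partial B(x,\e)$, the outer normal to $V_\e$ points toward $x$, i.e. $\nu = -(y-x)/|y-x|$. Using the splitting $G_U(x,y) = -\tfrac{1}{2\pi}\log|x-y| - H(x,y)$ from \eqref{G} with $H(x,\cdot)$ smooth in a neighborhood of $x$, a direct computation on the circle $|y-x| = \e$ gives $\partial_\nu G_U(x,y) = \tfrac{1}{2\pi\e} + O(1)$ and $G_U(x,y) = -\tfrac{1}{2\pi}\log\e + O(1)$ as $\e \to 0$.

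Finally, I would pass to the limit $\e \to 0$. Since $u \in C^2(\overline{U})$, the $G_U\,\partial_\nu u$ contribution on $\partial B(x,\e)$ has size $O(\e|\log\e|)$ and vanishes. For the $-u\,\partial_\nu G_U$ contribution, the $O(1)$ part of $\partial_\nu G_U$ integrates to $O(\e)$, while the singular part $\tfrac{1}{2\pi\e}$ yields $-\tfrac{1}{2\pi\e}\int_{\partial B(x,\e)} u(y)\,d\sigma(y)$, which converges to $-u(x)$ by continuity of $u$ and the mean-value average on the shrinking circle. Meanwhile $\int_{V_\e} G_U\Delta u\,dy \to \int_U G_U\Delta u\,dy$ by dominated convergence, since $G_U(x,\cdot) \in L^1_{\mathrm{loc}}$ and $\Delta u$ is bounded. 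Collecting all terms and rearranging yields \eqref{grf}. The only delicate points are the sign bookkeeping for the inward normal on $\partial B(x,\e)$ and the identification of $\partial_\nu$ applied to $-\tfrac{1}{2\pi}\log|x-y|$ as an approximate identity; once these are handled, the rest is standard.
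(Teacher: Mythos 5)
Your proof is correct: excising $B(x,\e)$, applying Green's second identity on $U\setminus \overline{B(x,\e)}$, using the Dirichlet condition to kill the $G_U\,\partial_\nu u$ term on $\partial U$, and letting the $\tfrac{1}{2\pi\e}$ singular part of $\partial_\nu G_U$ concentrate to $-u(x)$ on the shrinking circle is exactly the standard derivation, and your sign bookkeeping for the inward normal is right. The paper gives no proof of this lemma at all—it is quoted as a known fact from \cite{GT1983}—and your argument is precisely the classical one found there, so there is nothing to reconcile beyond the tacit smoothness of $\partial U$ (hence of $H(x,\cdot)$) needed to apply Green's identity up to the boundary.
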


Let us denote by $G_0(w,s)$ the {\em Green's function} of $\R^2\backslash B(0,1)$ given by (see \cite{BF1996})
\begin{equation*}
G_0(w,s)=
-\frac{1}{2\pi}\left(\log \big|{w-s}\big|-\log \big||w|s-\frac{w}{|w|}\big|\right).
\end{equation*}
By a straightforward computation, we have
 \begin{equation}\label{daaa11-02-10}
\frac{\partial G_0(w,s)}{\partial \nu_s}=
 \frac{1-|w|^2}{2\pi|w-s|^2},~\mbox{for}~|w|>1,~|s|=1~\mbox{and}~\nu_s=-s.
\end{equation}
\begin{rem}
Let us point out that  the {\em Green's function} $G_0(w,s)$ of $\R^2\backslash B(0,1)$  and
the {\em Poisson kernel} of $B(0,1)$  has the same formula (see \cite{BF1996}). This will be used to compute some  integral in $\R^2\backslash B(0,1)$.
\end{rem}
Next lemma will be basic and useful in the following computations in next section.
 \begin{lemma}\label{G3}
Let $v_\e(x)$ be the function which verifies
 \begin{equation*}
\begin{cases}
\Delta v_\e(x)=0&~\mbox{in}~\O\setminus B(x_0,\e),\\[1mm]
v_\e(x)=0&~\mbox{on}~\partial\O,\\[1mm]
v_\e(x)=1&~\mbox{on}~\partial B(x_0,\e).
\end{cases}
\end{equation*}
Then we have that
\begin{equation*}
v_\e(x)=-\frac{2\pi}{\log\e}\Big( 1-\frac{2\pi  H(x_0,x_0)}{\log\e}\Big)G(x,x_0)+O\left(\frac1{|\log\e|^2}\right).
\end{equation*}
\end{lemma}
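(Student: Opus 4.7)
The plan is to compare $v_\e$ with a scalar multiple of the Green's function $G(\cdot,x_0)$, choosing the scalar so that the boundary values on $\pa B(x_0,\e)$ match to leading order, and then to control the remainder by the maximum principle.

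Concretely, set
\begin{equation*}
c_\e := -\fr{2\pi}{\log\e + 2\pi H(x_0,x_0)},
\end{equation*}
and consider $w_\e(x) := v_\e(x) - c_\e G(x,x_0)$. Since $G(\cdot,x_0)$ is harmonic in $\O\setminus B(x_0,\e)$ and vanishes on $\pa\O$, the function $w_\e$ is harmonic in $\O\setminus B(x_0,\e)$ with $w_\e = 0$ on $\pa\O$; only its boundary values on $\pa B(x_0,\e)$ require attention. Using \eqref{G} and the continuity of $H(\cdot,x_0)$, for $|x-x_0|=\e$ one has $G(x,x_0) = -\tfrac{1}{2\pi}\log\e - H(x_0,x_0) + O(\e)$, so the above choice of $c_\e$ gives $c_\e G(x,x_0) = 1 + O(\e/|\log\e|)$ on $\pa B(x_0,\e)$, and hence $w_\e = O(\e/|\log\e|)$ there. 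The maximum principle then yields $\|w_\e\|_{L^\infty(\O\setminus B(x_0,\e))} = O(\e/|\log\e|)$, which is far smaller than the error claimed in the lemma.

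It remains to expand $c_\e$ in powers of $1/\log\e$. Writing $c_\e = -\fr{2\pi}{\log\e}\cdot\fr{1}{1 + 2\pi H(x_0,x_0)/\log\e}$ and using the geometric series, one finds
\begin{equation*}
c_\e = -\fr{2\pi}{\log\e}\l(1 - \fr{2\pi H(x_0,x_0)}{\log\e}\r) + O\l(\fr{1}{|\log\e|^3}\r).
\end{equation*}
Plugging this back into $v_\e = c_\e G(x,x_0) + w_\e$ and combining the errors yields the stated identity.

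The key subtlety, and what I expect to be the main obstacle, is the handling of the remainder $O(|\log\e|^{-3})\,G(x,x_0)$ on the thin annulus where $|x-x_0|$ is comparable to $\e$. There $G(x,x_0)$ grows like $|\log\e|$, so the product is only $O(|\log\e|^{-2})$; this is precisely the error rate appearing in the statement of the lemma and cannot be improved without pushing the expansion of $c_\e$ one further order. Away from $x_0$, $G(x,x_0)$ is uniformly bounded and the same remainder is actually $O(|\log\e|^{-3})$, so the estimate is sharp only in a neighborhood of the hole.
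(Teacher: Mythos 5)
Your proof is correct and rests on the same core mechanism as the paper's: compare $v_\e$ with a scalar multiple of $G(\cdot,x_0)$, match boundary data on $\pa B(x_0,\e)$ using \eqref{G}, and control the remainder by the maximum principle. The execution is slightly different: you choose the exact constant $c_\e=-2\pi/\bigl(\log\e+2\pi H(x_0,x_0)\bigr)$ in one step, making $v_\e-c_\e G(\cdot,x_0)$ of size $O(\e/|\log\e|)$ on $\pa B(x_0,\e)$, and only afterwards Taylor-expand $c_\e$ in $1/\log\e$; the paper instead builds the coefficient iteratively, first studying $w_\e=\tfrac{1}{H(x_0,x_0)}\bigl[\tfrac{\log\e}{2\pi}v_\e+G(\cdot,x_0)\bigr]$ and then repeating the same comparison on $w_\e$. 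Your one-step version is cleaner, and it has the small bonus of not dividing by $H(x_0,x_0)$ (which could in principle vanish, e.g.\ for a unit disk); the paper's iterative version makes it transparent how to push the expansion to higher order if needed. Your closing observation is also on point: the $O(|\log\e|^{-3})$ remainder in $c_\e$, multiplied by $G(x,x_0)=O(|\log\e|)$ near $\pa B(x_0,\e)$, is exactly what produces the $O(|\log\e|^{-2})$ error stated in the lemma, and this is the genuinely sharp part of the bound.
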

\begin{proof}
First we define
$$w_\e(x)=\frac1{H(x_0,x_0)}\left[\frac{\log\e}{2\pi} v_\e(x)+G(x,x_0)\right].$$
Then it holds
 \begin{equation*}
\begin{cases}
\Delta  w_\e(x)=0&~\mbox{in}~\O\setminus B(x_0,\e),\\[1mm]
w_\e(x)=0&~\mbox{on}~\partial\O,\\[1mm]
w_\e(x)= \frac1{H(x_0,x_0)}\Big(\frac{\log\e}{2\pi}+G(x,x_0)\Big)=-1+O(\e)&~\mbox{on}~\partial B(x_0,\e).
\end{cases}
\end{equation*}
Hence repeating above procedure, we can find
 \begin{equation}\label{ads}
\begin{cases}
\Delta \left[\frac{\log\e}{2\pi}w_\e(x)-G(x,x_0)\right]=0&~\mbox{in}~\O\setminus B(x_0,\e),\\[1.5mm]
\frac{\log\e}{2\pi}w_\e(x)-G(x,x_0)=0&~\mbox{on}~\partial\O,\\[1.5mm]
\frac{\log\e}{2\pi}w_\e(x)-G(x,x_0)= H(x,x_0)+O(\e|\log\e|)&~\mbox{on}~\partial B(x_0,\e).
\end{cases}
\end{equation}
  Then
by the maximum principle and \eqref{ads}, we get that
$$\frac{\log\e}{2\pi}w_\e(x)-G(x,x_0)=O(1)~\,~\mbox{in}~\O\setminus B(x_0,\e),$$
which gives
\begin{equation*}
\begin{split}
w_\e(x)=&\frac{2\pi}{\log\e}G(x,x_0)+O\left(\frac1{|\log\e|}\right).
\end{split}
\end{equation*}
Hence coming back to $v_\e(x)$, we find
\begin{equation*}
\begin{split}
v_\e(x)=&\frac{2\pi}{\log\e}\Big( H(x_0,x_0)w_\e(x)-G(x,x_0)\Big)\\=&
-\frac{2\pi}{\log\e}\Big( 1-\frac{2\pi  H(x_0,x_0)}{\log\e}\Big)G(x,x_0)+O\left(\frac1{|\log\e|^2}\right),
\end{split}
\end{equation*}
which gives the claim.
\end{proof}

Let $u_0$ and $u_\e$ be solutions of \eqref{1h} and \eqref{aa2} respectively, then we can write down the equation satisfied by $u_\e-u_0$ as follows
\begin{equation}\label{A1}
\begin{cases}
-\Delta \big(u_\varepsilon-u_0\big)=0~&\mbox{in}~\Omega_\varepsilon,\\[1mm]
u_\varepsilon-u_0=0~&\mbox{on}~\partial\Omega,\\[1mm]
u_\varepsilon-u_0=-u_0~&\mbox{on}~\partial B(x_0,\e).
\end{cases}
\end{equation}
Now by Green's representation formula \eqref{grf},  we get
\begin{equation}\label{6-25-51}
u_\varepsilon(x)=u_0(x)+
\int_{\partial B(x_0,\e)} \frac{\partial G_\varepsilon(x,z)}{\partial\nu_z}u_0(z)d\sigma(z),
\end{equation}
where $\nu_z=-\frac{z-x_0}{|z-x_0|}$ is the outer normal vector of $\partial \big(\R^2\backslash  B(x_0,\varepsilon)\big)$ and $G_\varepsilon(x,z)$ is the Green's function of $-\Delta$ in $\O_\varepsilon$ with zero Dirichlet boundary condition.
Now we set
$$x=x_0+\e w,\ z=x_0+\e s\,\,~\mbox{and}~
F_\e(w,s)= G_\varepsilon(x_0+\e w,x_0+\e s),$$
then  \eqref{6-25-51} becomes
\begin{equation}\label{6-26-1}
 u_\varepsilon(x)=u_0(x)+K_{\e}(w)+L_{\e}(w),
\end{equation}
where \begin{equation*}
K_{\e}(w):=\int_{\partial B(0,1)} \frac{\partial G_0(w,s)}{\partial\nu_s}
u_0(x_0+\e s)
d\sigma(s),
\end{equation*}
and
\begin{equation*}
L_{\e}(w):=\int_{\partial B(0,1)}\left(\frac{\partial F_\varepsilon(w,s)}{\partial\nu_s}-\frac{\partial G_0(w,s)}{\partial\nu_s}\right)u_0(x_0+\e s)d\sigma(s),
\end{equation*}
with $\nu_s=-\frac{s}{|s|}$  the outer normal vector of $\partial \big(\R^2\backslash  B(0,1)\big)$.

\section{Asymptotic analysis on $u_\e$}\label{s3}

To compute $\lambda_{\max}\big(D^2u_\e(x)\big)$ at the maximum point of $u_\e$, the first thing is to find the location of the maximum point of $u_\e$. Here we divide $\Omega_\e$ into the following two cases:
\vskip 0.2cm
\begin{description}
  \item[(1)] $x$ is far away from $x_0$, namely $|x-x_0|\ge C>0$.
  \vskip 0.2cm
  \item[(2)] $x$ is near $x_0$, namely $|x-x_0|=o(1)$.
\end{description}
\vskip 0.2cm
And we will find that the behavior of $u_\e(x)$ near $\partial B(x_0,\varepsilon)$ is crucial and a key point is to understand the limit of $G_\varepsilon(x,y)$ according to the location of $x$.

\begin{lemma}\label{alemma-1}
Let  $u_0$ and $u_\e$ be solutions of \eqref{1h} and \eqref{aa2} respectively. Then for  any fixed $r>0$, it holds
\begin{equation*} 
 u_\varepsilon(x)\rightarrow  u_0(x)~\mbox{uniformly in}~C^2\big(\Omega\backslash B(x_0,r)\big).
\end{equation*}
\end{lemma}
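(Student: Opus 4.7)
The plan is to study the harmonic correction $w_\e(x) := u_\e(x) - u_0(x)$, which by \eqref{A1} is harmonic in $\Omega_\e$, vanishes on $\partial\Omega$, and equals $-u_0$ on $\partial B(x_0,\e)$. In particular $|w_\e|\le M:=\|u_0\|_{L^\infty(\Omega)}$ on $\partial B(x_0,\e)$, so the content of the lemma is that this harmonic function, though of size $O(1)$ on the tiny hole, must decay uniformly in $C^2$ away from the hole as $\e\to 0$.

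First I would establish $L^\infty$-decay by a barrier argument using the function $v_\e$ from Lemma \ref{G3}. Since $v_\e=1$ on $\partial B(x_0,\e)$, $v_\e=0$ on $\partial\Omega$, and the functions $M v_\e\pm w_\e$ are harmonic in $\Omega_\e$ with non-negative boundary data, the maximum principle yields $|w_\e(x)|\le M\,v_\e(x)$ throughout $\Omega_\e$. Combined with Lemma \ref{G3}, and using that $G(x,x_0)$ is bounded on $\Omega\setminus B(x_0,r/2)$, this gives
\begin{equation*}
\|w_\e\|_{L^\infty(\Omega\setminus B(x_0,r/2))}=O\!\left(\tfrac{1}{|\log\e|}\right)\longrightarrow 0.
\end{equation*}

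Next I would upgrade this uniform smallness to $C^2$-smallness by invoking classical linear elliptic regularity. The function $w_\e$ is harmonic on the open set $U:=\Omega\setminus\overline{B(x_0,r/2)}$ (for all $\e<r/2$) and vanishes on the smooth portion $\partial\Omega$ of $\partial U$. Interior derivative estimates for harmonic functions (as in Lemma \ref{lem2-3}, applied to $w_\e$ and then to $\partial_i w_\e$) control $\nabla w_\e$ and $\nabla^2 w_\e$ at points of $\Omega\setminus B(x_0,r)$ lying at positive distance from $\partial\Omega$, while boundary Schauder estimates give the analogous control in a neighborhood of $\partial\Omega$. One therefore obtains $\|w_\e\|_{C^2(\Omega\setminus B(x_0,r))}\le C_r\|w_\e\|_{L^\infty(U)}=O(1/|\log\e|)$, which proves the lemma.

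I do not anticipate a serious obstacle here: the argument is a clean composition of the barrier estimate already packaged in Lemma \ref{G3} with standard elliptic regularity for the harmonic difference $u_\e-u_0$. The only mildly delicate point is that the Schauder constant $C_r$ must be $\e$-independent, which holds because the distance from $\Omega\setminus B(x_0,r)$ to the shrinking inner boundary $\partial B(x_0,\e)\subset B(x_0,r/2)$ stays bounded below by the fixed positive number $r/2$, so the constants depend only on $r$ and the geometry of $\partial\Omega$.
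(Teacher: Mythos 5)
Your proof is correct and is a genuinely different, and in one respect cleaner, route than the paper's. The paper works through the Green's representation formula \eqref{6-25-51}, Taylor expands $u_0(z)=u_0(x_0)+O(\e)$ on $\partial B(x_0,\e)$, recognizes the resulting kernel integral $v(x):=\int_{\partial B(x_0,\e)}\frac{\partial G_\e(x,z)}{\partial\nu_z}\,d\sigma(z)$ as solving the same auxiliary boundary-value problem as $-v_\e$, and then applies Lemma~\ref{G3}. You bypass the representation formula entirely by comparing $w_\e=u_\e-u_0$ directly with $\|u_0\|_\infty\,v_\e$ via the maximum principle, which lands on Lemma~\ref{G3} immediately; the decomposition through $K_\e$, $L_\e$ and the kernel bound \eqref{6-25-2} is never needed. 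The two arguments thus share Lemma~\ref{G3} as the central input but reach it by different means, and your barrier comparison is the more elementary of the two.

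On the $C^2$ upgrade your argument is actually tighter than what the paper writes. The paper derives $O(1/\e)$ bounds on $\nabla_x\frac{\partial G_\e(x,z)}{\partial\nu_z}$ and $\nabla^2_x\frac{\partial G_\e(x,z)}{\partial\nu_z}$ in \eqref{6-25-4}--\eqref{6-25-5}; integrating those over the circle $\partial B(x_0,\e)$ of length $2\pi\e$ only yields an $O(1)$ bound on $\nabla(u_\e-u_0)$ and $\nabla^2(u_\e-u_0)$, not the claimed $O(1/|\log\e|)$. The step the paper leaves implicit is precisely the one you make explicit: once $\|w_\e\|_{L^\infty(\Omega\setminus B(x_0,r/2))}=O(1/|\log\e|)$ is known and $w_\e$ is harmonic, interior derivative estimates on balls of radius comparable to $r$ (with $\e$-independent constants, as you correctly observe) propagate the smallness to $\nabla w_\e$ and $\nabla^2 w_\e$ on $\Omega\setminus B(x_0,r)$. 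One small caveat, shared with the paper: your appeal to boundary Schauder estimates near $\partial\Omega$ presumes some boundary regularity beyond convexity; the paper's own proof likewise only justifies interior control (the hypothesis $B(x,r/2)\Subset\Omega_\e$ there fails near $\partial\Omega$). Since the lemma is applied at interior critical points, this is harmless, but it is worth interpreting ``$C^2(\Omega\setminus B(x_0,r))$'' as local/interior $C^2$ convergence.
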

\begin{proof}
First for any $x\in \Omega\backslash B(x_0,r)$, we
know
\begin{equation}\label{Asfs}
\begin{split}
u_\varepsilon(x)=&u_0(x)+
\int_{\partial B(x_0,\e)} \frac{\partial G_\varepsilon(x,z)}{\partial\nu_z}u_0(z)d\sigma(z)\\=&
u_0(x)+
\int_{\partial B(x_0,\e)} \frac{\partial G_\varepsilon(x,z)}{\partial\nu_z}\Big(u_0(x_0)+O\big(\e\big)\Big)d\sigma(z)
\\=&
u_0(x)+u_0(x_0)
\int_{\partial B(x_0,\e)} \frac{\partial G_\varepsilon(x,z)}{\partial\nu_z} d\sigma(z)
+O\big( \e\big)
\int_{\partial B(x_0,\e)}\Big| \frac{\partial G_\varepsilon(x,z)}{\partial\nu_z}\Big|d\sigma(z).
\end{split}
\end{equation}
Now let $x=x_0+\e w$ and $z=x_0+\e s$, then using \eqref{daaa11-02-10}, we have
\begin{equation}\label{a6-25-2}
\begin{split}
\frac{\partial G_\varepsilon(x,z)}{\partial\nu_z}
=&\frac{1}{\e}\left(\frac{\partial G_0(w,s)}{\partial\nu_s}+\Big(\frac{\partial F_\varepsilon(w,s)}{\partial\nu_s}-\frac{\partial G_0(w,s)}{\partial\nu_s}\Big)\right)\\=&
O\left( \frac{1}{\e}\Big(1+\big|\frac{\partial F_\varepsilon(w,s)}{\partial\nu_s}-\frac{\partial G_0(w,s)}{\partial\nu_s}\big|\Big)\right).
\end{split}\end{equation}

On the other hand, we can verify that
\begin{equation}\label{adsd}
\begin{cases}
\Delta_w \Big(\frac{\partial F_\varepsilon(w,s)}{\partial\nu_s}-\frac{\partial G_0(w,s)}{\partial\nu_s}\Big)=0&~\mbox{in}~\frac{\O-x_0}\e\setminus B(0,1),\\[2mm]
\Big(\frac{\partial F_\varepsilon(w,s)}{\partial\nu_s}-\frac{\partial G_0(w,s)}{\partial\nu_s}\Big)=0&~\mbox{on}~\partial B(0,1),\\[2mm]
\Big(\frac{\partial F_\varepsilon(w,s)}{\partial\nu_s}-\frac{\partial G_0(w,s)}{\partial\nu_s}\Big)= \frac{|w|^2-1}{2\pi |w-s|^2}=O\big(1\big) &~\mbox{on}~\frac{\partial\O-x_0}\e.
\end{cases}
\end{equation}
By the maximum principle and \eqref{adsd}, we get that
\begin{equation}\label{adsda}\frac{\partial F_\varepsilon(w,s)}{\partial\nu_s}-\frac{\partial G_0(w,s)}{\partial\nu_s}=O(1)~\,~\mbox{in}~\frac{\O-x_0}\e\setminus B(0,1).
\end{equation}
Hence from \eqref{a6-25-2} and \eqref{adsda}, we find
\begin{equation}\label{6-25-2}
\begin{split}
\frac{\partial G_\varepsilon(x,z)}{\partial\nu_z}
=O\left( \frac{1}{\e}\right).
\end{split}\end{equation}

Also defining $ v(x):= \displaystyle\int_{\partial B(x_0,\e)}\frac{\partial G_\varepsilon(x,z)}{\partial\nu_z}d\sigma(z)$, then it holds
  \begin{equation*}
\begin{cases}
\Delta_xv(x)=0&~\mbox{in}~\O\setminus B(x_0,\e),\\[1mm]
v(x)=0&~\mbox{on}~\partial\O,\\[1mm]
v(x)=-1&~\mbox{on}~\partial B(x_0,\e).
\end{cases}
\end{equation*}
Then using Lemma \ref{G3}, we have
   \begin{equation} \label{6-25-8}
   v(x)=
 \frac{2\pi}{\log\e}\Big( 1-\frac{2\pi H(x_0,x_0)}{\log\e}\Big)G(x,x_0)+O\left(\frac1{|\log\e|^2}\right).
 \end{equation}
 Hence from \eqref{Asfs}, \eqref{6-25-2}  and \eqref{6-25-8}, we find
 \begin{equation*}
 \begin{split}
u_\varepsilon(x)=&  
u_0(x)+u_0(x_0) \left(
 \frac{2\pi}{\log\e}\Big( 1-\frac{2\pi H(x_0,x_0)}{\log\e}\Big)G(x,x_0)+O\Big(\frac1{|\log\e|^2}\Big) \right)
+O\big( \e\big) \\=& 
u_0(x)+
O\left(\frac1{|\log\e|}\right)\,~
\mbox{uniformly in}~C\big(\Omega\backslash B(x_0,r)\big).
\end{split}
\end{equation*}

On the other hand,  for any fixed $x\in \Omega\backslash B(x_0,r)$, by \eqref{G},  we can verify that
\begin{equation}\label{6-25-3}
G(x,z),~\big|\frac{\partial G(x,z)}{\partial\nu_z}\big|,~|\nabla_xG(x,z)|~\mbox{and}~\big|\nabla_x\frac{\partial G(x,z)}{\partial\nu_z}\big|~\mbox{are bounded for}~z\in \partial B(x_0,\e).
\end{equation}
 And
by \eqref{A1}, it follows
\begin{equation*}
-\Delta_x \Big( \frac{\partial G_\varepsilon(x,z)}{\partial\nu_z}- \frac{\partial G(x,z)}{\partial\nu_z}\Big)=0~ ~\mbox{in}~\Omega_\varepsilon.
\end{equation*}
Since $B\big(x,\frac{r}{2}\big) \subset\subset \Omega_\e$,
 using Lemma \ref{lem2-3}, \eqref{6-25-2} and \eqref{6-25-3}, we get,  for $x\in \Omega\backslash B(x_0,r)$ and $z\in \partial B(x_0,\e)$,
\begin{equation}\label{6-25-4}
\begin{split}
\left|\nabla_x\Big( \frac{\partial G_\varepsilon(x,z)}{\partial\nu_z}- \frac{\partial G(x,z)}{\partial\nu_z}\Big)
\right|=O \left( \Big|\frac{\partial G_\varepsilon(x,z)}{\partial\nu_z}- \frac{\partial G(x,z)}{\partial\nu_z}\Big|\right)=O\left( \frac{1}{\e}\right),
\end{split}\end{equation}
and
\begin{equation}\label{6-25-5}
\begin{split}
 \left|\nabla^2_x\Big( \frac{\partial G_\varepsilon(x,z)}{\partial\nu_z}- \frac{\partial G(x,z)}{\partial\nu_z}\Big)
\right|=O \left( \Big|\frac{\partial G_\varepsilon(x,z)}{\partial\nu_z}- \frac{\partial G(x,z)}{\partial\nu_z}\Big|\right)=O\left( \frac{1}{\e}\right).
\end{split}\end{equation}
Then \eqref{6-25-3}, \eqref{6-25-4} and \eqref{6-25-5} give us that
\begin{equation*}
\begin{split}
\left|\nabla_x \frac{\partial G_\varepsilon(x,z)}{\partial\nu_z}
\right|= O\left( \frac{1}{\e}\right)~\mbox{and}~\left|\nabla^2_x \frac{\partial G_\varepsilon(x,z)}{\partial\nu_z}
\right|= O\left( \frac{1}{\e}\right).
\end{split}\end{equation*}
Hence from above estimates, it follows
  \begin{equation*}
u_\varepsilon(x)=u_0(x)+
O\left(\frac1{|\log\e|}\right)\,~
\mbox{uniformly in}~C^2\big(\Omega\backslash B(x_0,r)\big).
\end{equation*}
\end{proof}
In the rest of this section, we devote to analyze the asymptotic behavior on $u_\e$, $\nabla u_\e$ and
$\nabla^2 u_\e$ near $\partial B(x_0,\varepsilon)$. And using \eqref{6-26-1}, we need to compute the terms
$K_\e$, $\nabla_wK_\e$, $\nabla_w^2K_\e$, $L_\e$, $\nabla_wL_\e$ and $\nabla_w^2L_\e$.
\begin{lemma}\label{lll1}
Let $w=\frac{x-x_0}{\e}$ and if $|x-x_0|\to 0$,
then it holds
\begin{equation}\label{aaas11-15-04}
\begin{split}
 K_{\e}(w)=& - u_0\Big(x_0+ \frac{\varepsilon w}{|w|^2}\Big)+\frac{\varepsilon^2}{2}\Big(1-\frac{1}{|w|^2}\Big),
 \end{split}
\end{equation}
\begin{equation}\label{aaas11-15-04-1}
\begin{split}
\frac{\partial K_{\e}(w)}{\partial w_i}= O\Big(\frac{\varepsilon}{|w|^2}\Big)~\,\mbox{and}~\,
 \frac{\partial^2 K_{\e}(w)}{\partial w_i\partial w_j}= O\Big(\frac{\varepsilon}{|w|^3}\Big).
\end{split}
\end{equation}
\end{lemma}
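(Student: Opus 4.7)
The plan is to recognise $\pa G_0(w,s)/\pa\nu_s$ as (minus) the Poisson kernel of the unit disk, evaluated at the Kelvin-inverted point $\xi := w/|w|^2 \in B(0,1)$, and then to apply the Poisson integral representation after correcting for the fact that $u_0(x_0+\e\,\cdot\,)$ is not harmonic on $B(0,1)$. For the identification, I would observe that when $|s|=1$ one has $|\xi-s|^2 = |w-s|^2/|w|^2$, so
\[
\frac{\pa G_0(w,s)}{\pa \nu_s} \;=\; \frac{1-|w|^2}{2\pi|w-s|^2} \;=\; -\frac{1-|\xi|^2}{2\pi|\xi-s|^2} \;=\; -P(\xi,s),
\]
where $P(\xi,s)$ denotes the Poisson kernel of $B(0,1)$.

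Next, for $\e>0$ small enough that $B(x_0,\e)\Sub\O$, the function $v(\xi):=u_0(x_0+\e\xi)$ lies in $C^2(\overline{B(0,1)})$ and satisfies $-\D_\xi v = \e^2$ by the chain rule and \eqref{1h}. I would let $h(\xi)$ be the explicit radial particular solution of $-\D h=\e^2$ on $B(0,1)$ with $h=0$ on $\pa B(0,1)$ (a scalar multiple of $1-|\xi|^2$), so that $v-h$ is harmonic in $B(0,1)$ with the same boundary values as $v$. The classical Poisson integral formula then gives
\[
v(\xi) - h(\xi) \;=\; \int_{\pa B(0,1)} P(\xi,s)\, u_0(x_0+\e s)\, d\sigma(s),
\]
and combining this with the kernel identification yields
\[
K_\e(w) \;=\; -\int_{\pa B(0,1)} P(\xi,s)\, u_0(x_0+\e s)\, d\sigma(s) \;=\; -u_0(x_0+\e\xi) + h(\xi).
\]
Substituting $\xi = w/|w|^2$ and $|\xi|^2 = 1/|w|^2$ then produces the closed formula \eqref{aaas11-15-04}.

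For the derivative estimates \eqref{aaas11-15-04-1} I would simply differentiate this closed formula. The map $w\mapsto \e w/|w|^2$ has Jacobian of size $O(\e/|w|^2)$ and second derivatives of size $O(\e/|w|^3)$; combined with the chain rule and the boundedness of $\n u_0$ and $\n^2 u_0$ on $\overline\O$ (elliptic regularity for \eqref{1h}), this immediately yields $\pa K_\e/\pa w_i = O(\e/|w|^2)$ and $\pa^2 K_\e/\pa w_i\pa w_j = O(\e/|w|^3)$. The contribution of the correction term $h(\xi)$, itself $O(\e^2)$, is of strictly lower order in each derivative.

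The main obstacle is really the kernel identification: once one sees that the exterior normal derivative $\pa G_0/\pa\nu_s$ on the unit circle matches, up to sign, the interior Poisson kernel at the Kelvin-inverted point $w/|w|^2$, everything reduces to the classical Poisson integral formula on the disk plus routine chain-rule bookkeeping.
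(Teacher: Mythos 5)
Your proof is correct and follows essentially the same route as the paper: both identify $\partial G_0(w,s)/\partial\nu_s$ on $|s|=1$ with minus the Poisson kernel of the unit disk evaluated at the Kelvin-inverted point $w/|w|^2$, apply the Green's/Poisson representation formula on $B(0,1)$ to the function $\xi\mapsto u_0(x_0+\e\xi)$ (correcting for $-\Delta_\xi u_0(x_0+\e\xi)=\e^2$), and then differentiate the resulting closed expression using the chain rule and the boundedness of $\nabla u_0,\nabla^2 u_0$ on $\overline\Omega$. The only minor point is that the explicit radial corrector is $\tfrac{\e^2}{4}(1-|\xi|^2)$, not $\tfrac{\e^2}{2}(1-|\xi|^2)$ as written in \eqref{aaas11-15-04}, but this constant discrepancy (already present in the paper) has no effect on the $O(\e/|w|^2)$ and $O(\e/|w|^3)$ derivative bounds.
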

\begin{proof}
First  taking $\tau=\frac{w}{|w|^2}=\frac{\varepsilon(x-x_0)}{|x-x_0|^2}$ and using \eqref{daaa11-02-10}, we get
\begin{equation*}
\begin{split}
\int_{\partial B(0,1)} &\frac{\partial G_0(w,s)}{\partial\nu_s}u_0(x_0+\e s)d\sigma(s)=-\frac{1}{2\pi }\int_{\partial B(0,1)} \frac{1-|\tau|^2}{|\tau-s|^2}u_0(x_0+\e s)d\sigma(s).
\end{split}
\end{equation*}
Lemma \ref{lem2-a} and \eqref{daaa11-02-10} give us  that for any $\phi\in C^2\big(\overline{B(0,1)}\big)$, it holds
\begin{equation}\label{G2}
\phi(s)=\frac{1}{2\pi}\int_{\partial B(0,1)} \frac{1-|s|^2}{|s-y|^2}\phi(y)d\sigma(y)
-\int_{B(0,1)} \Delta \phi(y) G_0(s,y)dy.
\end{equation}
Hence for $|\tau|<1$ and  choosing $\phi(\tau)=u_0(x_0+\e \tau)$ in \eqref{G2} we find
\begin{equation*}
u_0(x_0+\e \tau)= \frac{1}{2\pi }\int_{\partial B(0,1)} \frac{1-|\tau|^2}{|\tau-s|^2}u_0(x_0+\e s)d\sigma(s)+\frac{\varepsilon^2}{2}\big(1-|\tau|^2\big).
\end{equation*}
From the above computations  we  get
\begin{equation}\label{Bd22}
\begin{split}
K_{\e}(w)=&\int_{\partial B(0,1)}\frac{\partial G_0(w,s)}{\partial\nu_s}u_0(x_0+\e s)d\sigma(s)\\=&- u_0\Big(x_0+ \frac{\varepsilon w}{|w|^2}\Big)+\frac{\varepsilon^2}{2}\Big(1-\frac{1}{|w|^2}\Big).
\end{split}
\end{equation}
And then by differentiating \eqref{Bd22} with respect to $w_i$, we have
\begin{equation}\label{aaab1-15-04}
\begin{split}
\frac{\partial K_{\e}(w)}{\partial w_i}=&
-\frac{\e}{|w|^{2}}\left(\frac{\partial u_0(x_0+ \frac{\varepsilon w}{|w|^2})}{\partial x_i}-2\frac{w_i}{ |w|^2}\sum_{j=1}^2\frac{\partial u_0(x_0+ \frac{\varepsilon w}{|w|^2})}{\partial x_j}w_j\right)
+ \frac{\varepsilon^2 w_i}{|w|^4}.
\end{split}
\end{equation}
Next differentiating \eqref{aaab1-15-04} with respect to $w_i$, we find
\begin{equation}\label{1-15-04a}
\begin{split}
\frac{\partial^2 K_{\e}(w)}{\partial w_i\partial w_j}=&
O\left(\frac{\e}{|w|^{3}}\right).
\end{split}
\end{equation}
Hence \eqref{aaas11-15-04} and \eqref{aaas11-15-04-1} follow by \eqref{Bd22}, \eqref{aaab1-15-04} and \eqref{1-15-04a}.
\end{proof}
\begin{lemma}
Let $w=\frac{x-x_0}{\e}$ and if $|x-x_0|\to 0$,
then it holds
\begin{equation}\label{as11-15-06}
\begin{split}
L_{\e}(w)=&\frac{\log |w| }{|\log \e|} \Big(u_0(x_0)+o(1)\Big).
\end{split}
\end{equation}
\end{lemma}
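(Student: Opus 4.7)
The plan is to freeze $u_0$ at $x_0$ on the unit circle and then evaluate the resulting total-flux quantity via Lemma \ref{G3}. Since $u_0\in C^1$, $u_0(x_0+\e s)=u_0(x_0)+O(\e)$ uniformly on $|s|=1$, so I would decompose
\begin{equation*}
L_\e(w)=u_0(x_0)\,\Psi_\e(w)+O(\e)\int_{\pa B(0,1)}\Bigl|\tfrac{\pa F_\e(w,s)}{\pa\nu_s}-\tfrac{\pa G_0(w,s)}{\pa\nu_s}\Bigr|\,d\sigma(s),
\end{equation*}
where $\Psi_\e(w):=\int_{\pa B(0,1)}\bigl(\pa_{\nu_s} F_\e-\pa_{\nu_s} G_0\bigr)d\sigma(s)$. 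Showing this error is actually $O(\e)$ is routine: \eqref{daaa11-02-10} bounds the $G_0$-integrand for bounded $|w|$, and undoing the scaling $z=x_0+\e s$ together with \eqref{6-25-2} bounds $\int_{\pa B(0,1)}|\pa_{\nu_s}F_\e|\,d\sigma(s)$ by $\int_{\pa B(x_0,\e)}|\pa_{\nu_z}G_\e|\,d\sigma(z)=O(1)$.

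The crucial step is to recognise $\Psi_\e(w)=v(x)+1$, where $v$ is the harmonic function appearing in the proof of Lemma \ref{alemma-1} (with $v=-1$ on $\pa B(x_0,\e)$ and $v=0$ on $\pa\O$). The $F_\e$-part of this identity comes from undoing the scaling in $z$, while the $-1$ from the $G_0$-part is the explicit Poisson-type integral $\int_{|s|=1}(1-|w|^2)(2\pi|w-s|^2)^{-1}d\sigma(s)$ for $|w|>1$, evaluated via the classical $\int_0^{2\pi}(|w|^2-2|w|\cos\phi+1)^{-1}d\phi=2\pi/(|w|^2-1)$. Substituting \eqref{6-25-8} for $v(x)$ then yields
\begin{equation*}
\Psi_\e(w)=1+\tfrac{2\pi}{\log\e}\Bigl(1-\tfrac{2\pi H(x_0,x_0)}{\log\e}\Bigr)G(x,x_0)+O\bigl(|\log\e|^{-2}\bigr).
\end{equation*}

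Finally I would substitute $x=x_0+\e w$ and $G(x,x_0)=-\tfrac1{2\pi}\log|\e w|-H(x_0,x_0)+o(1)$, using continuity of $H$ and $|x-x_0|\to 0$. The $\log\e$ piece of $G$ multiplied by $\tfrac{2\pi}{\log\e}$ produces $-1$, which cancels the leading $+1$ in $\Psi_\e$; the two $H(x_0,x_0)/\log\e$ pieces cancel against each other through the product with $(1-\tfrac{2\pi H(x_0,x_0)}{\log\e})$, up to order $|\log\e|^{-2}$. What remains is $\Psi_\e(w)=\tfrac{\log|w|}{|\log\e|}(1+o(1))$, and multiplying by $u_0(x_0)$ (together with the $O(\e)$ already absorbed) yields \eqref{as11-15-06}. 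The main obstacle I foresee is precisely this last bookkeeping of $H(x_0,x_0)/\log\e$ terms: one must verify that they cancel rather than leave an $O(1/|\log\e|)$ residue, which would be of the same order as the main $\log|w|/|\log\e|$ for bounded $|w|$ and would spoil the claimed $o(1)$ factor. This cancellation is exactly what the explicit coefficient $1-2\pi H(x_0,x_0)/\log\e$ supplied by Lemma \ref{G3} is designed to deliver.
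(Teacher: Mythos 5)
Your proposal is correct; it rests on the same pillars as the paper's proof -- Lemma \ref{G3}, the expansion $G(x,x_0)=-\frac{1}{2\pi}\log|\e w|-H(x_0,x_0)+o(1)$, and the cancellation built into the coefficient $1-\frac{2\pi H(x_0,x_0)}{\log\e}$ -- but it organizes the computation differently. The paper splits the kernel $\partial_{\nu_s}F_\e-\partial_{\nu_s}G_0$ into a normalized harmonic piece $M_{\e,1}$ (zero on $\partial B(0,1)$, one on $\frac{\partial\O-x_0}{\e}$) plus a remainder $M_{\e,2}=O(\e)$, integrates each against $u_0(x_0+\e s)$, and then applies Lemma \ref{G3} to $1-M_{\e,1}$. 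You instead freeze $u_0(x_0+\e s)=u_0(x_0)+O(\e)$ first and identify the remaining total-flux quantity $\Psi_\e(w)$ with $v(x)+1$: the $F_\e$-part collapses under unscaling to the function $v(x)$ already introduced in the proof of Lemma \ref{alemma-1}, while the $G_0$-part is the explicit Poisson integral equal to $-1$. This reuses \eqref{6-25-8} directly and trims the auxiliary definitions $M_{\e,1}$, $M_{\e,2}$. The bookkeeping you flagged -- that the two $H(x_0,x_0)/\log\e$ contributions cancel through the quadratic coefficient supplied by Lemma \ref{G3} -- indeed goes through, and it is precisely the cancellation the paper exploits. One small slip: you invoke \eqref{6-25-2} to bound $\int_{\partial B(0,1)}|\partial_{\nu_s}F_\e|\,d\sigma$, but that estimate was established for $|x-x_0|\ge r$, whereas here $|x-x_0|\to 0$. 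The fix is immediate and actually simpler: the uniform bound $\partial_{\nu_s}F_\e-\partial_{\nu_s}G_0=O(1)$ from \eqref{adsda}, together with $\int_{\partial B(0,1)}|\partial_{\nu_s}G_0|\,d\sigma=1$ (Poisson kernel for $|w|>1$), gives the required $O(1)$ without any restriction on $|x-x_0|$. This is cosmetic, not a gap.
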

\begin{proof}
First we define
$$M_{\e,2}(w,s)=\sum_{i=1}^2\left(\frac{\partial G_0(w,s)}{\partial s_i}-\frac{\partial F_\varepsilon(w,s)}{\partial s_i}\right)s_i-\frac{1}{2\pi}  M_{\e,1}(w,s),$$
with
\begin{equation*}
\begin{cases}
\Delta_w {M}_{\e,1}(w,s)=0&~\mbox{in}~\frac{\O-x_0}\e\setminus B(0,1),\\[1mm]
 {M}_{\e,1}(w,s)=0&~\mbox{on}~\partial B(0,1),\\[1mm]
 {M}_{\e,1}(w,s)=1&~\mbox{on}~\frac{\partial\O-x_0}\e.
\end{cases}
\end{equation*}
Then $L_{\e}(w)$ can be written as
\begin{equation}\label{B5}
\begin{split}
L_{\e}(w)=&\underbrace{\frac{1}{2\pi}\int_{\partial B(0,1)} {M}_{\e,1}(w,s)u_0(x_0+\e s)d\sigma(s)}_{:= {L}_{\e,1}(w)} +\underbrace{\int_{\partial B(0,1)} M_{\e,2}(w,s)u_0(x_0+\e s)d\sigma(s)}_{:=L_{\e,2}(w)}.
\end{split}
\end{equation}
Also for any $w\in \frac{\partial\O-x_0}\e$ and $s\in \partial B(0,1)$, it holds
\begin{equation*}
\sum_{i=1}^2\left(\frac{\partial G_0(w,s)}{\partial s_i}-\frac{\partial F_\varepsilon(w,s)}{\partial s_i}\right)s_i= \sum_{i=1}^2 \frac{\partial G_0(w,s)}{\partial s_i} s_i=
\frac{1}{2\pi} \frac{|w|^2-1}{|w-s|^2}.
\end{equation*}
Hence we can verify
\begin{equation}\label{cl1}
\begin{cases}
\Delta_wM_{\e,2}(w,s)=0&~\mbox{in}~\frac{\O-x_0}\e\setminus B(0,1),\\[1mm]
M_{\e,2}(w,s)=0&~\mbox{on}~\partial B(0,1),\\[1mm]
M_{\e,2}(w,s)=\frac{1}{2\pi}\left(\frac{|w|^2-1}{|w-s|^2}-1\right)&~\mbox{on}~\frac{\partial\O-x_0}\e.
\end{cases}
\end{equation}
Since for any $w\in \frac{\partial\O-x_0}\e$  and $s\in \partial B(0,1)$,  we get that
\begin{equation}\label{cl2}
\begin{split}
 \frac{|w|^2-1}{|w-s|^2}-1=& O\Big(\frac{1}{|w|}\Big)=O\big(\e\big).
\end{split}
\end{equation}
Then  by the maximum principle, \eqref{cl1} and \eqref{cl2}, we find
\begin{equation}\label{cl3}
\big|M_{\e,2}(w,s)\big|=O(\varepsilon)~\mbox{for}~w\in\frac{\O-x_0}\e \setminus B(0,1)~\mbox{and}~s\in \partial B(0,1).
\end{equation}
Hence it follows
 \begin{equation}\label{m4}
L_{\e,2}(w)=O\big(\varepsilon\big)\,~\mbox{for}\,~w\in\frac{\O-x_0}\e \setminus B(0,1).
\end{equation}

Next we estimate ${M}_{\e,1}(w,s)$. To do this let us introduce the function $\psi_\e(x,s)$ as follows
$$\psi_\e(x,s):=1- M_{\e,1}\left(\frac{x-x_0}\e,s\right)~\,\mbox{for}~~x\in \O\setminus B(x_0,\e)~\,\mbox{and}\, s\in \partial B(0,1).$$
Then it follows
\begin{equation*}
\begin{cases}
\Delta_x \psi_\e(x,s)=0&~\mbox{in}~\O\setminus B(x_0,\e),\\[1mm]
\psi_\e(x,s)=0&~\mbox{on}~ \partial\O,\\[1mm]
\psi_\e(x,s)=1&~\mbox{on}~\partial B(x_0,\e).
\end{cases}
\end{equation*}
Hence using Lemma \ref{G3} we have that
\begin{equation*}
\psi_\e(x,s)=-\frac{2\pi}{\log\e}\Big( 1-\frac{2\pi H(x_0,x_0)}{\log\e}\Big)G(x,x_0)+O\left(\frac1{|\log\e|^2}\right).
\end{equation*}
 Coming back to ${M}_{\e,1}(w,s)$, we get
 \begin{equation*}
\begin{split}
{M}_{\e,1}(w,s)=&1+\frac{2\pi}{\log\e}\Big( 1-\frac{2\pi  H(x_0,x_0)}{\log\e}\Big)G(\varepsilon w+x_0,x_0)+O\left(\frac1{|\log\e|^2}\right)\\=&
\frac{\log |w|}{|\log\e|}\Big(1+o(1)\Big)
+o\left(\frac{1}{|\log\e|}\right).
\end{split}
\end{equation*}
 In last equality  we used that $\varepsilon |w|=|x-x_0|\rightarrow 0$. And then
 \begin{equation}\label{m3}
\begin{split}
L_{\e,1}(w)= \frac{\log |w|}{|\log\e|}\Big(u_0(x_0)+o(1)\Big)
+o\left(\frac{1}{|\log\e|}\right).
\end{split}
\end{equation}
Then \eqref{as11-15-06} follows by \eqref{B5}, \eqref{m4} and \eqref{m3}.
\end{proof}
Then we have following estimate on $u_\e$  near $\partial B(x_0,\varepsilon)$.
\begin{prop}\label{alemma-2d}
Let $w=\frac{x-x_0}{\e}$ and $|x-x_0|\to 0$,   then it holds
\begin{equation}\label{s11-15-0d1}
 u_\e(x) =
 u_0(x)   + \frac{\log  |x-x_0| }{|\log \e|} \Big(u_0\big(x_0\big)+o(1)\Big)+o(1).
\end{equation}
\end{prop}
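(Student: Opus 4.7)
The plan is to derive Proposition \ref{alemma-2d} as an immediate consequence of the decomposition \eqref{6-26-1} together with the asymptotic formulas for $K_\e(w)$ in Lemma \ref{lll1} and for $L_\e(w)$ in \eqref{as11-15-06}. Recall that
\begin{equation*}
u_\e(x) = u_0(x) + K_\e(w) + L_\e(w), \qquad w = \frac{x-x_0}{\e},
\end{equation*}
so the task is to rewrite $K_\e(w) + L_\e(w)$ in terms of $|x-x_0|$ rather than $|w|$.

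First I would handle $K_\e(w)$. By \eqref{aaas11-15-04},
\begin{equation*}
K_\e(w) = -u_0\!\left(x_0 + \frac{\e w}{|w|^2}\right) + \frac{\e^2}{2}\!\left(1 - \frac{1}{|w|^2}\right).
\end{equation*}
Since $w \in \R^2 \setminus B(0,1)$ (because $x \in \Omega_\e$), the shifted point $x_0 + \frac{\e w}{|w|^2}$ lies within distance $\frac{\e}{|w|} \le \e$ of $x_0$, so by continuity of $u_0$ the first term tends to $-u_0(x_0)$, while the second is $O(\e^2)$. Hence $K_\e(w) = -u_0(x_0) + o(1)$ uniformly in the regime $|x-x_0| \to 0$.

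Next I would rewrite $L_\e(w)$ using the elementary identity
\begin{equation*}
\log|w| = \log|x-x_0| - \log\e = \log|x-x_0| + |\log\e|,
\end{equation*}
valid for small $\e$, so that
\begin{equation*}
\frac{\log|w|}{|\log\e|} = 1 + \frac{\log|x-x_0|}{|\log\e|}.
\end{equation*}
Substituting into \eqref{as11-15-06} gives
\begin{equation*}
L_\e(w) = \Bigl(u_0(x_0) + o(1)\Bigr) + \frac{\log|x-x_0|}{|\log\e|}\Bigl(u_0(x_0) + o(1)\Bigr).
\end{equation*}

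Finally I would add the two contributions. The constant terms satisfy
\begin{equation*}
K_\e(w) + \bigl(u_0(x_0) + o(1)\bigr) = o(1),
\end{equation*}
so the total becomes
\begin{equation*}
u_\e(x) = u_0(x) + \frac{\log|x-x_0|}{|\log\e|}\bigl(u_0(x_0) + o(1)\bigr) + o(1),
\end{equation*}
which is exactly \eqref{s11-15-0d1}. There is no genuine obstacle here; the only point requiring care is tracking the regimes of $|w|$ (bounded versus $|w| \to \infty$) to confirm that the $o(1)$ absorptions are uniform, but both the $K_\e$ analysis (where $|w| \ge 1$ forces $\frac{\e w}{|w|^2} \to 0$) and the $L_\e$ analysis (where $\frac{\log|w|}{|\log\e|}$ stays bounded because $|w| \le \operatorname{diam}(\Omega)/\e$) behave as required.
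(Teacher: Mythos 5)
Your proposal is correct and follows essentially the same route as the paper: plug the formula \eqref{aaas11-15-04} for $K_\e$ and the asymptotic \eqref{as11-15-06} for $L_\e$ into the decomposition \eqref{6-26-1}, observe that $K_\e(w)=-u_0(x_0)+O(\e)$ because $|x_0+\e w/|w|^2 - x_0|\le\e$, and rewrite $\log|w|=\log|x-x_0|+|\log\e|$ so the $+u_0(x_0)$ produced by the constant part of $L_\e$ cancels the $-u_0(x_0)$ from $K_\e$. The paper compresses these steps into a single displayed chain of equalities, while you spell out the cancellation and the boundedness of $\log|x-x_0|/|\log\e|$ explicitly, but there is no substantive difference.
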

\begin{proof}
From \eqref{6-26-1}, \eqref{aaas11-15-04} and \eqref{as11-15-06}, we have
\begin{equation*}
\begin{split}
 u_\e(x) =&
 u_0(x) - u_0\Big(x_0+ \frac{\varepsilon w}{|w|^2}\Big)+\frac{\varepsilon^2}{2}\big(1-\frac{1}{|w|^2}\big)
 +\frac{\log |w| }{|\log \e|} \Big(u_0\big(x_0\big)+o(1)\Big)\\=&
 u_0(x)   + \frac{\log  |x-x_0| }{|\log \e|} \Big(u_0\big(x_0\big)+o(1)\Big)+o(1).
\end{split}\end{equation*}

\end{proof}
Now we continue to compute $\nabla_wL_\e$ and $\nabla_w^2L_\e$.
\begin{lemma}\label{aB3}
Let $w=\frac{x-x_0}{\e}$ and $|x-x_0|\to 0$, we have following results:
 \vskip 0.2cm

 \textup{(1)} For any fixed $C_0>1$, if $|w|\geq C_0$, then it holds
\begin{equation}\label{aast11-23-05}
\begin{split}
\frac{\partial L_{\e}(w)}{\partial w_i}=&
\frac{u_0(x_0)w_i}{|\log\e|\cdot |w|^2}+o\left(\frac{1}{ |w|\cdot |\log\e|}\right),
\end{split}
\end{equation}
and
\begin{equation}\label{aamt11-23-05}
\begin{split}
\frac{\partial^2 L_{\e}(w)}{\partial w_i\partial w_j}=&
\frac{u_0(x_0)}{ |\log \varepsilon|\cdot |w|^2} \Big(\delta_{ij}-\frac{2w_iw_j}{|w|^2}\Big)
+o\left(\frac{1}{ |w|^2\cdot |\log\e|}\right).
\end{split}
\end{equation}

 \textup{(2)}
If $\displaystyle\lim_{\e\to 0}|w|=1$, then it holds
\begin{equation}\label{d11-23-05}
\begin{split}
\Big\langle \nabla_w L_{\e}(w),w\Big\rangle
 \geq \frac{u_0(x_0)}{2|\log \e|}.
\end{split}\end{equation}
\end{lemma}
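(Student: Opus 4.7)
First, observe that the boundary-value problem defining $M_{\e,1}(w,s)$ does not actually involve $s$, so $M_{\e,1}(w,s)=M_{\e,1}(w)$ is a function of $w$ alone. Factoring it out of $L_{\e,1}$ and using $u_0(x_0+\e s)=u_0(x_0)+\e\nabla u_0(x_0)\cdot s+O(\e^2)$ (the linear term vanishing by symmetry of $\partial B(0,1)$), we get $L_{\e,1}(w)=M_{\e,1}(w)\bigl(u_0(x_0)+O(\e^2)\bigr)$. For $L_{\e,2}$, the bound $|M_{\e,2}(w,s)|=O(\e)$ from \eqref{cl3} together with the harmonicity of $M_{\e,2}$ in $w$ and Lemma \ref{lem2-3} applied in balls of radius $\ge(C_0-1)/2$ around $w$ (allowed because $|w|\ge C_0>1$) yield $|\nabla_w L_{\e,2}|$ and $|\nabla_w^2 L_{\e,2}|$ both of size $O(\e)$. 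These are absorbed in the stated errors, so the problem reduces to analyzing the derivatives of $M_{\e,1}(w)$.

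Next, I rewrite $M_{\e,1}(w)=1-\psi_\e(x_0+\e w)$ with $\psi_\e=v_\e$ as in Lemma \ref{G3}, and isolate the harmonic remainder $T_\e(x):=\psi_\e(x)+\tfrac{2\pi}{\log\e}\bigl(1-\tfrac{2\pi H(x_0,x_0)}{\log\e}\bigr)G(x,x_0)$, which satisfies $T_\e=O(1/|\log\e|^2)$ in $\O\setminus B(x_0,\e)$. Substituting $G(x_0+\e w,x_0)=-\tfrac{1}{2\pi}\log(\e|w|)-H(x_0+\e w,x_0)$ and expanding $H(x_0+\e w,x_0)=H(x_0,x_0)+O(\e|w|)$, one obtains the explicit decomposition $M_{\e,1}(w)=-\tfrac{\log|w|}{\log\e}+R_\e(w)-T_\e(x_0+\e w)$, where $R_\e$ is smooth in $w$ and, term by term, has first and second derivatives of size $o(1/(|w|\,|\log\e|))$ and $o(1/(|w|^2|\log\e|))$ respectively. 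For part (1) with $|w|\ge C_0>1$, Lemma \ref{lem2-3} applied to $T_\e$ in a ball of radius $\e(C_0-1)/2$ around $x_0+\e w$ gives $|\nabla_w T_\e(x_0+\e w)|=\e|\nabla_x T_\e|=O(1/|\log\e|^2)$ and similarly for the Hessian. Differentiating $-\log|w|/\log\e$ directly produces the claimed leading terms $\tfrac{w_i}{|w|^2|\log\e|}$ and $\tfrac{1}{|\log\e|\cdot|w|^2}\bigl(\delta_{ij}-\tfrac{2w_iw_j}{|w|^2}\bigr)$, and multiplying by $u_0(x_0)+O(\e^2)$ yields \eqref{aast11-23-05}--\eqref{aamt11-23-05}.

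For part (2) with $|w|\to 1^+$, the interior gradient estimate on $T_\e$ degenerates, which is the central obstacle. The escape is that we only need the radial derivative: since $\langle\nabla_w\log|w|,w\rangle\equiv 1$ identically, the main contribution is exactly $1/|\log\e|$, independent of $|w|$. To bound $\langle\nabla_w T_\e(x_0+\e w),w\rangle$ near $|w|=1$ I would compute the trace of $T_\e$ on $\partial B(x_0,\e)$ directly from its definition, obtaining $T_\e(x)=\tfrac{2\pi(H(x_0,x_0)-H(x,x_0))}{\log\e}+\tfrac{4\pi^2 H(x_0,x_0)H(x,x_0)}{(\log\e)^2}$, which in the rescaled variable $w\in\partial B(0,1)$ has $C^0$-norm $O(1/|\log\e|^2)$ and $C^1$-norm $O(\e/|\log\e|)$. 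A boundary Schauder estimate for harmonic functions (applied in a fixed neighborhood of $\partial B(0,1)$ in the rescaled annulus, or equivalently via reflection across $\partial B(0,1)$ and then interior estimates) then gives $|\nabla_w T_\e(x_0+\e w)|=O(1/|\log\e|^2)$ up to $|w|=1^+$. Combining with the $O(\e)$ contribution of $L_{\e,2}$, we arrive at $\langle\nabla_w L_\e(w),w\rangle=\tfrac{u_0(x_0)}{|\log\e|}(1+o(1))+O(\e)\ge\tfrac{u_0(x_0)}{2|\log\e|}$ for all sufficiently small $\e$, since $u_0(x_0)>0$ and $\e=o(1/|\log\e|)$. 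The main work is this final boundary-regularity step, made tractable by the observation that the trace of $T_\e$ on $\partial B(x_0,\e)$ is nearly constant.
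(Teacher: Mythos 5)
Your proof of part (1) follows essentially the same route as the paper's (splitting $L_\e = L_{\e,1}+L_{\e,2}$, factoring $M_{\e,1}$ out of $L_{\e,1}$, and subtracting $\log|w|/|\log\e|$ before applying the interior gradient estimate), but there is a real slip on $L_{\e,2}$: you apply Lemma~\ref{lem2-3} in balls of \emph{fixed} radius $\ge (C_0-1)/2$ and conclude $|\nabla_w L_{\e,2}|=O(\e)$, then claim this is absorbed into $o\big(\frac{1}{|w|\,|\log\e|}\big)$. That absorption requires $\e|w||\log\e|\to 0$, i.e.\ $|x-x_0||\log\e|\to 0$, which is \emph{not} implied by $|x-x_0|\to 0$ (e.g.\ $|x-x_0|=1/\sqrt{|\log\e|}$). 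The paper instead uses balls of radius $\sim(|w|-1)/2$, yielding $|\nabla_w M_{\e,2}|=O(\e/(|w|-1))=O(\e/|w|)$, and $\e/|w|=o\big(\frac{1}{|w||\log\e|}\big)$ holds because $\e|\log\e|\to 0$. The fix is trivial (use the $|w|$-dependent radius you already allude to), but as written your error bound for $L_{\e,2}$ is too weak.

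For part (2) you take a genuinely different route. The paper avoids the split into $L_{\e,1}$, $L_{\e,2}$ altogether: it bundles the full kernel of $L_\e$ minus $-\frac{\log|w|}{2\pi\log(r\e)}$ into a single auxiliary function $\eta_{\e,1}$, shows by the maximum principle that $\eta_{\e,1}\ge 0$, and applies Hopf's lemma to get $\langle\nabla_w\eta_{\e,1},w\rangle\ge 0$ on $|w|\to 1$ — a one-sided qualitative argument that needs no boundary regularity. Your approach instead computes the trace of $T_\e$ on $\partial B(x_0,\e)$ exactly and invokes a boundary Schauder (or Schwarz-reflection) estimate to get $|\nabla_w T_\e|=O(1/|\log\e|^2)$ up to $|w|=1$, which would give the stronger two-sided asymptotic $\langle\nabla_w L_\e,w\rangle=\frac{u_0(x_0)}{|\log\e|}(1+o(1))$. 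This is a valid and in some ways more informative alternative, but it is heavier machinery than the paper's trick. Two points need filling in: (a) you state "combining with the $O(\e)$ contribution of $L_{\e,2}$", but your earlier $O(\e)$ bound on $\nabla_w L_{\e,2}$ was derived only for $|w|\ge C_0>1$; near $|w|=1$ the interior estimate \eqref{acl3} degenerates, and you need a separate argument (e.g.\ Schwarz reflection across $|w|=1$ using $M_{\e,2}=0$ there, or a barrier/Hopf comparison) to control $\nabla_w M_{\e,2}$ up to the inner boundary; (b) the boundary Schauder step for $T_\e$ is only sketched — you should state the precise estimate being used and note that the constant is uniform in $\e$ because the inner-boundary geometry (a unit circle inside an expanding annulus) is fixed after rescaling.
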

\begin{proof}[\underline{\textbf{Proof of \eqref{aast11-23-05} and \eqref{aamt11-23-05}}}]
First for $w\in\frac{\O-x_0}\e \setminus B(0,1)~\mbox{and}~s\in \partial B(0,1)$, we know $$B\big(w,\frac{|w|-1}{2}\big)\subset\subset \frac{\O-x_0}\e \setminus B(0,1).$$
Then using
\eqref{a11-02-01}, \eqref{cl1} and \eqref{cl3}, we have
\begin{equation}\label{acl3}
\big|\nabla_wM_{\e,2}(w,s)\big|=O\Big(\frac{\varepsilon}{|w|-1}\Big)
~~\mbox{and}~~
\big|\nabla^2_wM_{\e,2}(w,s)\big|=O\Big(\frac{\varepsilon}{(|w|-1)^2}\Big).
\end{equation}
Hence if $\e|w|\to 0$ and  $|w|\geq C_0$ for any fixed $C_0>1$, then \eqref{acl3} implies
\begin{equation*}
\big|\nabla_wM_{\e,2}(w,s)\big|=O\Big(\frac{\varepsilon}{|w|}\Big)
~~\mbox{and}~~
\big|\nabla^2_wM_{\e,2}(w,s)\big|=O\Big(\frac{\varepsilon}{|w|^2}\Big).
\end{equation*}
And then it follows
\begin{equation}\label{11-21-33}
\frac{\partial {L}_{\e,2}(w)}{\partial w_i}=O\left(\frac{\e}{|w|}\right)~\mbox{and}~
\frac{\partial^2{L}_{\e,2}(w)}{\partial w_i\partial w_j}=O\left(\frac\e{|w|^2}\right).
\end{equation}
 Also $M_{\e,1}(w,s)-
\frac{\log |w|}{|\log\e|}$ is a harmonic function with respect to $w$ in $\frac{\O-x_0}\e\setminus B(0,1)$. Hence
if $\e|w|\to 0$ and  $|w|\geq C_0$ for any fixed $C_0>1$,
using \eqref{a11-02-01} we have
\begin{equation}\label{11-24-18}
\left|\nabla_w\left(M_{\e,1}(w,s)-
\frac{\log |w|}{|\log\e|}\right)\right|=o\left(\frac{1}{ |w|\cdot |\log\e|}\right),
\end{equation}
and
\begin{equation}\label{a11-24-18}
\left|\nabla^2_w\left(M_{\e,1}(w,s)-
\frac{\log |w|}{|\log\e|}\right)\right|=o\left(\frac{1}{ |w|^2\cdot|\log\e|}\right).
\end{equation}
Hence from \eqref{11-24-18} and \eqref{a11-24-18}, we have
\begin{equation}\label{11-21-35}
\frac{\partial L_{\e,1}(w)}{\partial w_i}
=\frac{w_i}{ |\log \varepsilon|\cdot |w|^2}\big(u_0(x_0)+o(1)\big),
\end{equation}
and
\begin{equation}\label{11-21-35a}
\frac{\partial^2 L_{\e,1}(w)}{\partial w_i\partial w_j}
=\frac{1}{ |\log \varepsilon|\cdot |w|^2}\big(u_0(x_0)+o(1)\big)\Big(\delta_{ij}-\frac{2w_iw_j}{|w|^2}\Big).
\end{equation}
And then \eqref{aast11-23-05} and  \eqref{aamt11-23-05} follows by \eqref{B5},
\eqref{11-21-33}, \eqref{11-21-35} and \eqref{11-21-35a}.
\end{proof}
\begin{proof}[\underline{\textbf{Proof of \eqref{d11-23-05}}}]
To consider the case $\displaystyle\lim_{\e\to 0}|w|=1$, we define a new function
$$\eta_{\e,1}(w,s)=\sum_{i=1}^2\left(\frac{\partial G_0(w,s)}{\partial s_i}-\frac{\partial F_\varepsilon(w,s)}{\partial s_i}\right)s_i+\frac{\log|w|}{2\pi\log (r\e)}.$$
Then we can write
\begin{equation*}
\frac{\partial L_{\e}(w)}{\partial w_i}= \int_{\partial B(0,1)}\left(
\frac{\partial\eta_{\e,1}(w,s)}{\partial w_i}-\frac{w_i}{2\pi|w|^2\log (r\e)}
\right)u_0(x_0+\e s)d\sigma(s).
\end{equation*}
Now we can verify
\begin{equation}\label{6-23-15}
\begin{cases}
\Delta_w\eta_{\e,1}(w,s)=0&~\mbox{in}~\frac{\O-x_0}\e\setminus B(0,1),\\[1mm]
\eta_{\e,1}(w,s)=0&~\mbox{on}~\partial B(0,1),\\[1mm]
\eta_{\e,1}(w,s)=\frac{1}{2\pi}\left(\frac{|w|^2-1}{|w-z|^2}+\frac{\log|w|}{\log (r\e)}\right)&~\mbox{on}~\frac{\partial\O-x_0}\e.
\end{cases}
\end{equation}
Setting $z=x_0+\e w$, for any $w\in \frac{\partial\O-x_0}\e$,  we get that
\begin{equation*}
\begin{split}
 \frac{|w|^2-1}{|w-s|^2}+\frac{\log|w|}{\log (r\e)}=&\frac{|z-x_0|^2-\e^2}{|z-x_0-\e s|^2}+\frac{\log (r|z-x_0|)-\log (r\e)}{\log (r\e)}\\=&
\frac{\log (r|z-x_0|)}{\log (r\e)}+O(\e).
\end{split}
\end{equation*}
Then taking some appropriate $r>0$ (for example, taking $r>0$ small such that $r|z-x_0|<1$ for any $z\in \partial\Omega$), we have
\begin{equation}\label{6-23-16}
\begin{split}
\frac{|w|^2-1}{|w-s|^2}+\frac{\log|w|}{\log (r\e)}\geq 0.
\end{split}
\end{equation}
Hence by the maximum principle, \eqref{6-23-15} and \eqref{6-23-16}, it holds
$$\eta_{\e,1}(w,s)\geq 0, ~\mbox{for any}~w\in \frac{\Omega-x_0}{\varepsilon}\backslash B(0,1).$$
Then Hopf's lemma gives us that
\begin{equation*}
\frac{\partial \eta_{\e,1}(\tau,s)}{\partial \nu_\tau}
<0, ~\mbox{for any}~\tau\in \partial B(0,1)~\mbox{with}~\nu_\tau=-\frac{\tau}{|\tau|}.
\end{equation*}
If  $\displaystyle\lim_{\varepsilon\rightarrow 0}|w|=1$,  by the sign-preserving property, for small $\varepsilon$, we find
\begin{equation}\label{6-23-31}
\sum^2_{i=1}\frac{\partial\eta_{\e,1}(w,s)}{\partial w_i}w_i
=-|w|\cdot\frac{\partial\eta_{\e,1}(w,s)}{\partial \nu_w}
\geq 0,~\mbox{with}~\nu_w=-\frac{w}{|w|}.
\end{equation}
Hence using \eqref{6-23-31}, we can compute that
\begin{equation*}
\begin{split}
\Big\langle \nabla_w L_{\e}(w),w\Big\rangle
=& \int_{\partial B(0,1)}\left(\sum^2_{i=1}\frac{\partial\eta_{\e,1}(w,s)}{\partial w_i}w_i-\frac{1}{2\pi\log (r\e)}
\right)u_0(x_0+\e s)d\sigma(s)\\ \geq  &
 \frac{1}{|\log \e|}\big(u_0(x_0)+o(1)\big)\geq \frac{u_0(x_0)}{2|\log \e|},
\end{split}\end{equation*}
which completes the proofs of  \eqref{d11-23-05}.
\end{proof}

From above computations, the precise asymptotic behavior of $\nabla u_\e$ and
$\nabla^2 u_\e$ near $\partial B(x_0,\varepsilon)$ can be stated as follows.
\begin{prop}\label{alemma-2}Let $w=\frac{x-x_0}{\e}$ and $|x-x_0|\to 0$, we have following results:
 \vskip 0.2cm

 \textup{(1).} For any fixed $C_0>1$, if $|w|\geq C_0$, then it holds
\begin{equation}\label{s11-15-01}
\frac{\partial u_\e(x)}{\partial x_i}=
\frac{\partial u_0(x)}{\partial x_i}+ \frac{u_0(x_0) (x_i-x_{0,i})}{ |\log\e|\cdot |x-x_0|^2}
+o\Big(\frac{1}{ |\log\e|\cdot |x-x_0|}\Big),
\end{equation}
and
\begin{equation}\label{s11-15-01a}
\begin{split}
\frac{\partial^2 u_\e(x)}{\partial x_i\partial x_j}=&
\frac{\partial^2 u_0(x)}{\partial x_i\partial x_j}+ \frac{u_0(x_0) }{ |\log\e|\cdot |x-x_0|^2}\Big(\delta_{ij}-\frac{ (x_i-x_{0,i})(x_j-x_{0,j})}{ |x-x_0|^2} \Big) \\&
+o\Big( \frac{1}{ |\log\e|\cdot |x-x_0|^2}\Big).
\end{split}\end{equation}

 \textup{(2).}
If $\displaystyle\lim_{\e\to 0}|w|=1$, then it holds
\begin{equation}\label{s11-15-01b}
\begin{split}
\big|\nabla u_\e(x)\big|
 \geq \frac{u_0(x_0)}{8\e|\log \e|}.
\end{split}\end{equation}
\end{prop}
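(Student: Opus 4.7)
The plan is to differentiate the decomposition $u_\e(x) = u_0(x) + K_\e(w) + L_\e(w)$ from \eqref{6-26-1} using the chain rule with $w = (x-x_0)/\e$, and then substitute the estimates for $K_\e$ and $L_\e$ already established in Lemmas \ref{lll1} and \ref{aB3}. Since $\partial w_i/\partial x_j = \delta_{ij}/\e$, one has
\begin{equation*}
\frac{\partial u_\e}{\partial x_i}=\frac{\partial u_0}{\partial x_i}+\frac{1}{\e}\left(\frac{\partial K_\e}{\partial w_i}+\frac{\partial L_\e}{\partial w_i}\right),\quad \frac{\partial^2 u_\e}{\partial x_i\partial x_j}=\frac{\partial^2 u_0}{\partial x_i\partial x_j}+\frac{1}{\e^2}\left(\frac{\partial^2 K_\e}{\partial w_i\partial w_j}+\frac{\partial^2 L_\e}{\partial w_i\partial w_j}\right).
\end{equation*}

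For part (1), assuming $|w|\geq C_0>1$, I would use the bounds in Lemma \ref{lll1} to get
\begin{equation*}
\frac{1}{\e}\frac{\partial K_\e}{\partial w_i}=O(1/|w|^2)=O(\e^2/|x-x_0|^2),\qquad\frac{1}{\e^2}\frac{\partial^2 K_\e}{\partial w_i\partial w_j}=O(1/(\e|w|^3))=O(\e^2/|x-x_0|^3).
\end{equation*}
Because $|w|$ is bounded below by $C_0$ and $\e|\log\e|\to 0$, these two contributions are respectively $o(1/(|\log\e|\cdot|x-x_0|))$ and $o(1/(|\log\e|\cdot|x-x_0|^2))$, hence absorbed in the stated error terms. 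The leading pieces of \eqref{aast11-23-05} and \eqref{aamt11-23-05} from Lemma \ref{aB3}(1), translated back via $w_i=(x_i-x_{0,i})/\e$ and $1/|w|^2=\e^2/|x-x_0|^2$, produce exactly the main terms claimed in \eqref{s11-15-01} and \eqref{s11-15-01a}.

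For part (2), with $\lim_{\e\to 0}|w|=1$, I would estimate $|\nabla u_\e(x)|$ from below by testing the gradient against the direction $w$:
\begin{equation*}
\langle\nabla u_\e(x),w\rangle=\langle\nabla u_0(x),w\rangle+\frac{1}{\e}\langle\nabla_w K_\e(w),w\rangle+\frac{1}{\e}\langle\nabla_w L_\e(w),w\rangle.
\end{equation*}
The first term is $O(1)$ since $u_0\in C^1(\overline\O)$ and $|w|\to 1$; the second is also $O(1)$ because Lemma \ref{lll1} yields $|\langle\nabla_w K_\e(w),w\rangle|\leq|w|\cdot O(\e/|w|^2)=O(\e)$; whereas the lower bound \eqref{d11-23-05} from Lemma \ref{aB3}(2) gives $\frac{1}{\e}\langle\nabla_w L_\e(w),w\rangle\geq u_0(x_0)/(2\e|\log\e|)$. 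Since $u_0(x_0)>0$ and $|w|\leq 2$ for $\e$ small, the Cauchy--Schwarz inequality $|\nabla u_\e(x)|\cdot|w|\geq\langle\nabla u_\e(x),w\rangle$ yields $|\nabla u_\e(x)|\geq u_0(x_0)/(8\e|\log\e|)$ once the $O(1)$ terms are swallowed by the dominant $1/(\e|\log\e|)$ contribution.

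The only real obstacle is bookkeeping in part (1): the $K_\e$ contributions are polynomial in $\e$ but lack the $|\log\e|^{-1}$ factor that is present in the target error terms, so one must check they are genuinely absorbed. The scaling identity $\e|w|=|x-x_0|$, together with $|w|\geq C_0$, makes this routine since $\e^2|\log\e|/|x-x_0|=\e|\log\e|/|w|\to 0$, and analogously at second order. Part (2) is conceptually simpler: the signed lower bound of Lemma \ref{aB3}(2) blows up at rate $1/(\e|\log\e|)$, and this rate strictly dominates every other term appearing in the chain rule.
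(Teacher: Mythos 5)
Your proposal is correct and follows essentially the same route as the paper: part (1) differentiates the decomposition \eqref{6-26-1} and substitutes the bounds from Lemmas \ref{lll1} and \ref{aB3}, noting that the $K_\e$ contributions are $O(\e|\log\e|/|w|)$ relative to the target error and hence absorbed, while part (2) tests $\nabla u_\e$ against the direction $w$ and lets the lower bound \eqref{d11-23-05} dominate the $O(1)$ remainders via Cauchy--Schwarz and $|w|\le 2$. One detail you glossed over by saying the lemma terms match \eqref{s11-15-01a} \emph{exactly}: translating $\delta_{ij}-\frac{2w_iw_j}{|w|^2}$ from \eqref{aamt11-23-05} to $x$-coordinates gives $\delta_{ij}-\frac{2(x_i-x_{0,i})(x_j-x_{0,j})}{|x-x_0|^2}$, so the formula as typeset in \eqref{s11-15-01a} is missing a factor of $2$ in the off-diagonal term, a slip your computation would have surfaced had you written it out in full.
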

\begin{proof}
 \textup{(1).} First by \eqref{6-26-1}, we have
\begin{equation}\label{6-26-1d}
 \frac{\partial u_\varepsilon(x)}{\partial x_i}= \frac{\partial u_0(x)}{\partial x_i}+ \frac{1}{\e}\frac{\partial K_{\e}(w)}{\partial w_i}+  \frac{1}{\e}\frac{\partial L_{\e}(w)}{\partial w_i}.
\end{equation}
Then from  \eqref{aaas11-15-04-1}, \eqref{aast11-23-05} and \eqref{6-26-1d}, we find
\begin{equation*} 
\begin{split}
 \frac{\partial u_\varepsilon(x)}{\partial x_i}=& \frac{\partial u_0(x)}{\partial x_i}+
 O\Big(\frac{1}{|w|^2}\Big) +
\frac{u_0(x_0)w_i}{\e\cdot|\log\e|\cdot |w|^2}+o\left(\frac{1}{ \e\cdot |w|\cdot |\log\e|}\right)
\\=&
\frac{\partial u_0(x)}{\partial x_i}+ \frac{u_0(x_0) (x_i-x_{0,i})}{ |\log\e|\cdot |x-x_0|^2}
+o\Big(\frac{1}{ |\log\e|\cdot |x-x_0|}\Big)+O\Big(\frac{\e^2}{|x-x_0|^2}\Big)\\=&
\frac{\partial u_0(x)}{\partial x_i}+ \frac{u_0(x_0) (x_i-x_{0,i})}{ |\log\e|\cdot |x-x_0|^2}
+o\Big(\frac{1}{ |\log\e|\cdot |x-x_0|}\Big).
\end{split}
\end{equation*}
Similarly,  by \eqref{6-26-1}, we know
\begin{equation*} 
 \frac{\partial^2 u_\varepsilon(x)}{\partial x_i\partial x_j}= \frac{\partial^2 u_0(x)}{\partial x_i\partial x_j}+ \frac{1}{\e^2}\frac{\partial^2 K_{\e}(w)}{\partial w_i\partial w_j}+
 \frac{1}{\e^2}\frac{\partial^2 L_{\e}(w)}{\partial w_i\partial w_j},
\end{equation*}
which, together with \eqref{aaas11-15-04-1} and  \eqref{aamt11-23-05}, implies
 \eqref{s11-15-01a}.
\vskip 0.2cm

 \textup{(2).}
If $\displaystyle\lim_{\e\to 0}|w|=1$,  from \eqref{6-26-1}, \eqref{aaas11-15-04-1} and  \eqref{d11-23-05},
 we have
\begin{equation*}
\begin{split}
\big|\nabla u_\e(x)\big| \geq &\frac{1}{2}
\Big|\big\langle\nabla_x u_\varepsilon(x), w\big\rangle\Big|\\
\geq &  \frac{1}{2\e} \Big|\big\langle\nabla_w L_{\e}(w), w\big\rangle\Big|
-\frac{1}{2} \Big|\big\langle\nabla_x u_0(x), w\big\rangle\Big|
 - \frac{1}{2\e} \Big|\big\langle\nabla_w K_{\e}(w), w\big\rangle\Big| \geq \frac{u_0(x_0)}{8\e|\log \e|}.
 \end{split}
\end{equation*}
\end{proof}
\section{Proofs of Theorem \ref{th1.1}}\label{s9}

Firstly, we give the precise location of the maximum point $x_\e$ of $u_\e(x)$ on $\Omega_\e$.

\begin{prop}\label{Prop1-2a}
If $x_0\neq y_0$, then the maximum point $x_\e$ of $u_\e(x)$ on $\Omega_\e$
satisfies
$$x_\e \to y_0~~ \mbox{as}~~\e\to 0,$$ where $y_0$ is the   maximum point of  $u_0(x)$.
\end{prop}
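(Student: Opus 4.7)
The plan is to dichotomize between the region far from $x_0$, where Lemma \ref{alemma-1} gives $C^2$-convergence of $u_\e$ to $u_0$, and the thin shell close to $x_0$, where Proposition \ref{alemma-2d} controls $u_\e$ pointwise. Since $x_0\neq y_0$, I fix $r>0$ so small that $y_0\notin \overline{B(x_0,r)}$; the value of $r$ will be further constrained below.

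In the annulus $\e\leq |x-x_0|\leq r$, Proposition \ref{alemma-2d} gives
\begin{equation*}
u_\e(x)=u_0(x)+\frac{\log|x-x_0|}{|\log\e|}\bigl(u_0(x_0)+o(1)\bigr)+o(1).
\end{equation*}
Since $|x-x_0|\leq r<1$, the logarithmic factor is non-positive, and $u_0(x_0)>0$ by the strong maximum principle for \eqref{1h}; hence the middle term is $\leq o(1)$. Combined with the continuity of $u_0$ at $x_0$, this yields the uniform bound
\begin{equation*}
u_\e(x)\leq u_0(x_0)+\omega(r)+o(1)\quad\text{on the annulus},
\end{equation*}
where $\omega(r)\to 0$ as $r\to 0$. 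Invoking the uniqueness of the global maximum of $u_0$ on a planar convex domain (Makar-Limanov \cite{ML71}), we have $u_0(y_0)>u_0(x_0)$, so I may shrink $r$ so that $\omega(r)<\tfrac{1}{2}(u_0(y_0)-u_0(x_0))$. Then
\begin{equation*}
u_\e(x)\leq \tfrac{1}{2}\bigl(u_0(x_0)+u_0(y_0)\bigr)+o(1)\quad\text{on the annulus},
\end{equation*}
while $u_\e=0$ on $\partial B(x_0,\e)$.

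On the complement $\Omega\setminus B(x_0,r)$, Lemma \ref{alemma-1} yields $u_\e\to u_0$ uniformly in $C^2$. In particular $u_\e(y_0)\to u_0(y_0)$, so $\max_{\Omega_\e}u_\e\geq u_0(y_0)-o(1)$, which is strictly larger than the bound on the annulus for all sufficiently small $\e$. Therefore $x_\e\in \Omega\setminus\overline{B(x_0,r)}$ for $\e$ small. On this compact set, $\nabla u_\e\to \nabla u_0$ uniformly, and $\nabla u_\e(x_\e)=0$; hence every limit point $x_*$ of $x_\e$ is a critical point of $u_0$ with $u_0(x_*)\geq u_0(y_0)$. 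By the uniqueness of the maximum, $x_*=y_0$, proving $x_\e\to y_0$.

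The only subtle point is the uniform bound in the annulus: a priori $\frac{\log|x-x_0|}{|\log\e|}(u_0(x_0)+o(1))$ can be as negative as $-u_0(x_0)+o(1)$ near $|x-x_0|=\e$, so one might fear that $u_\e$ overshoots $u_0$ somewhere in the annulus. What saves the argument is precisely the sign of this correction: because $u_0(x_0)>0$ and $\log|x-x_0|\le 0$, it only \emph{lowers} $u_\e$ below $u_0(x)$, and continuity of $u_0$ at $x_0$ closes the estimate. No ingredient beyond Lemma \ref{alemma-1}, Proposition \ref{alemma-2d}, and the classical uniqueness of the maximum is needed.
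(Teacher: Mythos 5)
Your proof is correct and follows essentially the same route as the paper's: both use Proposition \ref{alemma-2d} together with $u_0(x_0)>0$ and $\log|x-x_0|<0$ to show $u_\e\le u_0+o(1)$ near $x_0$, invoke the Makar-Limanov uniqueness $u_0(x_0)<u_0(y_0)$ to exclude the maximum from that region, and then use the $C^2$-convergence of Lemma \ref{alemma-1} away from $x_0$ to force $x_\e\to y_0$. Your writeup is slightly more explicit about choosing $r$ via the modulus of continuity and about the sign of the logarithmic correction, but the ingredients and the structure are the same.
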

\begin{proof}
First, for $x\in\Omega_\e$ with $|x-x_0|=o(1)$, it holds  $\log  |x-x_0|<0$, and then from \eqref{s11-15-0d1}, we find
\begin{equation}\label{y11-15-02}
u_\e(x)\leq
u_0(x)+o(1),~\mbox{in}~\big\{x\in\Omega_\e,~ ~|x-x_0|=o(1)\big\}.
\end{equation}
If $x_0\neq y_0$, then $u_0(x_0)<u_0(y_0)$, here we use the uniqueness of the critical point of $u_0(x)$(see \cite{ML71}). Hence by \eqref{y11-15-02},  we have
\begin{equation*}
u_\e(x)\leq \frac{u_0(x_0)+u_0(y_0)}{2}<u_0(y_0)\,\,~\mbox{in}~\big\{x\in\Omega_\e,~ ~|x-x_0|=o(1)\big\},
\end{equation*}
which gives us that $x_\e\notin \big\{x\in \Omega_\e, |x-x_0|=o(1)\big\}$.

Hence combining  Lemma \ref{alemma-1}, we know that
there exists a fixed small $r>0$ such that
$x_\e\in B(y_0,r)$ the maximum point $x_\e$ of $u_\e(x)$ will belong to $B(y_0,r)$
and $x_\e \to y_0$ as $\e\to 0$.

\end{proof}

\begin{prop}\label{Prop1-2}
If $x_0=y_0$, then the maximum point $x_\e$ of $u_\e(x)$ on $\Omega_\e$ can be written as
\begin{equation*} 
x_\varepsilon=x_0+
\sqrt{-\frac{u_0(x_0)+o(1)}\la}\frac{1}{\sqrt{|\log\e|}}v,
\end{equation*}
where $\lambda=\max\{\lambda_1,\lambda_2\}$,  $\lambda_1$ and $\lambda_2$ are the  eigenvalues of the matrix $D^2u_0(x_0)$ and $v$ an associated
eigenfunction with $|v|=1$.
\end{prop}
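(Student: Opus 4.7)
The plan is to proceed in three steps: localise $x_\e$ near $x_0$, use $\nabla u_\e(x_\e)=0$ to pin down both the scale of $|x_\e-x_0|$ and its direction as an eigenvector of $D^2 u_0(x_0)$, and finally compare values at the two candidate critical points to isolate the eigenvector for $\la=\max\{\la_1,\la_2\}$.

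For the first step I would argue that $x_\e\to x_0$. Since $x_0=y_0$ is the unique critical point of $u_0$ on $\O$ (by \cite{ML71}), $\max_{\O\setminus B(x_0,r)} u_0 < u_0(x_0)$ for every fixed $r>0$; Lemma \ref{alemma-1} then gives $\max_{\O_\e\setminus B(x_0,r)} u_\e \le u_0(x_0)-\d_r + o(1)$ for some $\d_r>0$. On the other hand, Proposition \ref{alemma-2d} evaluated at any point with, say, $|x-x_0|=|\log\e|^{-1/4}$ yields $u_\e(x)=u_0(x_0)+o(1)$. Comparing, $x_\e\in B(x_0,r)$ for every $r>0$ and small $\e$, so $x_\e\to x_0$.

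For the second step, set $t_\e=|x_\e-x_0|$, $e_\e=(x_\e-x_0)/t_\e$ and $w_\e=(x_\e-x_0)/\e$. I expect $|w_\e|\to\infty$: part (2) of Proposition \ref{alemma-2} rules out $|w_\e|\to 1$, and along any subsequence with $|w_\e|$ bounded and bounded away from $1$, part (1) of that proposition combined with $\nabla u_0(x_0)=0$ yields $|\nabla u_\e(x_\e)|\gtrsim (\e|w_\e||\log\e|)^{-1}>0$, contradicting criticality. Once $|w_\e|\to\infty$, part (1) of Proposition \ref{alemma-2} together with a Taylor expansion of $\nabla u_0$ around $x_0$ turns $\nabla u_\e(x_\e)=0$ into
\[
D^2 u_0(x_0)\,e_\e + \fr{u_0(x_0)}{|\log\e|\,t_\e^{2}}\,e_\e = O(t_\e) + o\Big(\fr{1}{|\log\e|\,t_\e^{2}}\Big).
\]
Projecting onto $e_\e^{\perp}$ gives $\langle D^2u_0(x_0)e_\e,\,e_\e^{\perp}\rangle=o(1)$, so every limit $v$ of $e_\e$ is an eigenvector of $D^2u_0(x_0)$ with eigenvalue $\la\in\{\la_1,\la_2\}$; projecting onto $e_\e$ gives $t_\e^{2}=-u_0(x_0)(1+o(1))/(\la|\log\e|)$, which is the claimed scaling.

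For the third step, assume $\la_1\neq\la_2$ (otherwise the claim is immediate). There are two candidate critical points $x_\e^{(i)}=x_0+t_{\e,i} v_i$ with $t_{\e,i}^{2}=-u_0(x_0)(1+o(1))/(\la_i|\log\e|)$ and $v_i$ a unit $\la_i$-eigenvector. Inserting them into a sharpened version of Proposition \ref{alemma-2d} (exploiting $\nabla u_0(x_0)=0$, which upgrades the boundary Taylor error from $O(\e)$ to $O(\e^{2})$, together with the expansion $M_{\e,1}(w,s)=\log|w|/|\log\e|+O(|\log\e|^{-2})$ implicit in the proof of Lemma \ref{G3}) yields
\[
u_\e(x_\e^{(i)}) = u_0(x_0) + \fr{u_0(x_0)}{2|\log\e|}\Big(-1-\log|\log\e| + \log\fr{u_0(x_0)}{-\la_i}\Big) + o\Big(\fr{1}{|\log\e|}\Big).
\]
Since $u_0(x_0)>0$, the only $i$-dependent contribution $\fr{u_0(x_0)}{2|\log\e|}\log(u_0(x_0)/(-\la_i))$ is maximal precisely when $-\la_i$ is smallest, i.e.\ $\la_i=\max\{\la_1,\la_2\}=:\la$, and the gap between the two candidate values is of order $1/|\log\e|$, dominating the error. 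Hence $x_\e$ corresponds to the $\la$-eigendirection, giving the stated formula.

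The main obstacle is the third step: the comparison requires sharpening the $o(1)$ error in Proposition \ref{alemma-2d} to $o(1/|\log\e|)$ at the scale $|x-x_0|\sim |\log\e|^{-1/2}$, which is not immediate from the statement but does follow by revisiting its proof with $\nabla u_0(x_0)=0$ and the precise Green's function expansion from Lemma \ref{G3}. A secondary technicality is the clean exclusion in step two of intermediate scales $|w_\e|$ bounded but not tending to $1$; this is handled by subsequence compactness together with parts (1) and (2) of Proposition \ref{alemma-2}.
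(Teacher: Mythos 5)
Your plan agrees with the paper through the first two steps (localize $x_\e$ to $|x_\e-x_0|=o(1)$, exclude bounded $|w_\e|$ via parts (1)--(2) of Proposition~\ref{alemma-2}, then use $\nabla u_\e(x_\e)=0$ together with the Taylor expansion of $\nabla u_0$ near $x_0$ to read off the scale $t_\e$ and the eigendirection). Where you genuinely diverge is the final selection step. The paper does not compare values: it plugs the candidate $x_\e^{(i)}$ into the Hessian estimate~\eqref{s11-15-01a}, conjugates by the orthogonal eigenbasis, and obtains (see~\eqref{ll5})
\begin{equation*}
\textbf{P}^{-1}D^2u_\e(x_\e^{(i)})\textbf{P}=\begin{pmatrix}\lambda_i+o(1)&0\\[1mm]0&\lambda_{3-i}-\lambda_i+o(1)\end{pmatrix},
\end{equation*}
so that the candidate with $\lambda_i<\lambda_{3-i}$ has negative determinant and is a saddle, while the one with $\lambda_i>\lambda_{3-i}$ has two negative eigenvalues and is the local maximum. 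This is a second-derivative test, entirely powered by Proposition~\ref{alemma-2}(1), with no need to sharpen the zeroth-order expansion~\eqref{s11-15-0d1}. Your route instead compares $u_\e(x_\e^{(1)})$ and $u_\e(x_\e^{(2)})$ directly, which is conceptually appealing (it identifies the global, not merely local, maximum among the candidates in one stroke) but has a cost you correctly flag: the $o(1)$ in Proposition~\ref{alemma-2d} is not good enough, since the $i$-dependent gap is of size $\tfrac{u_0(x_0)}{2|\log\e|}\log\tfrac{\lambda_2}{\lambda_1}$, i.e.\ $\Theta(1/|\log\e|)$.

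On that sharpening, your stated expansion $M_{\e,1}(w,s)=\log|w|/|\log\e|+O(|\log\e|^{-2})$ is not quite what Lemma~\ref{G3} yields. Inserting $G(\e w+x_0,x_0)=-\tfrac1{2\pi}\log(\e|w|)-H(x_0,x_0)+O(\e|w|)$ into the lemma gives
\begin{equation*}
M_{\e,1}(w,s)=\frac{\log|w|}{|\log\e|}\Big(1-\frac{2\pi H(x_0,x_0)}{|\log\e|}\Big)+O\Big(\frac1{|\log\e|^2}\Big)+O\Big(\frac{\e|w|}{|\log\e|}\Big),
\end{equation*}
and at the relevant scale $\log|w|\sim|\log\e|$ the correction $-\tfrac{2\pi H(x_0,x_0)\log|w|}{|\log\e|^2}$ is of order $1/|\log\e|$, not $1/|\log\e|^2$. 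This does not kill your argument, because that correction is (to the order that matters) the same for both candidates: its $i$-dependence enters only through $\log t_{\e,i}$, which is $O(\log|\log\e|)$, producing an $i$-dependent contribution of size $O(\log|\log\e|/|\log\e|^2)=o(1/|\log\e|)$. But you would need to track this explicitly, and likewise verify that the $o(1)$ flexibility in $t_{\e,i}$ and the $K_\e$, $L_{\e,2}$ remainders contribute only $o(1/|\log\e|)$, before the $\tfrac{u_0(x_0)}{2|\log\e|}\log\tfrac{u_0(x_0)}{-\lambda_i}$ term can be isolated. In short: a correct alternative route, but the ``sharpened Proposition~\ref{alemma-2d}'' you invoke does not drop out as cheaply as your parenthetical suggests; the paper's determinant test on $D^2u_\e(x_\e)$ sidesteps this entirely.
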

\begin{proof}
Since $\Omega$ is convex and $x_0=y_0$, then from \cite{ML71}, we know that $u_0(x)$ admits exact one critical point $x_0$. This means that for any $z_0\neq x_0$, there exists $r>0$ such that $u_0(x)$ has no critical points in $B(z_0,r)$. And from Lemma \ref{alemma-1}, we know that all critical points  of $u_\e(x)$ belong to $$D_\e:=\Big\{x\in \Omega_\e, |x-x_0|=o(1)\Big\}.$$
Next, for $x\in D_\e$, if $\frac{|x-x_0|}{\e}< C$,  from \eqref{s11-15-01} and \eqref{s11-15-01b}, we have
\begin{equation*}
\begin{split}
\big|\nabla u_\e(x)\big| \geq \frac{c_0}{\e|\log \e|}~~\,\mbox{for some}~~c_0>0,
 \end{split}
\end{equation*}
which implies that $\nabla u_\e(x)=0$ admits no solutions if $x\in D_\e$ and $\frac{|x-x_0|}{\e}< C$.

Finally, we analyze the critical points of $u_\e(x)$ on $x\in D_\e$ and $\frac{|x-x_0|}{\e}\to \infty$.
From \eqref{s11-15-01}, we can deduce that  
\begin{equation}\label{s11-16-01}
\begin{split}
0=&\frac{\partial u_\e(x_\e)}{\partial x_i}=
\frac{\partial u_0(x)}{\partial x_i}+ \frac{u_0(x_0) (x_i-x_{0,i})}{ |\log\e|\cdot |x-x_0|^2}
+o\Big(\frac{1}{ |\log\e|\cdot |x-x_0|}\Big)\\=&
\sum^2_{j=1}\left(\frac{\partial^2 u_0(x_0)}{\partial x_i\partial x_j}+o(1)\right)(x_{\e,j}-x_{0,j})+
\frac{x_{\e,i}-x_{0,i}}{|\log\e|\cdot |x_{\e}-x_0|^2}\Big(u_0(x_0)+o(1)\Big).
\end{split}\end{equation}
By \eqref{s11-16-01} we immediately get that $-\frac{1}{|x_{\e}-x_0|^2|\log \e|}\to\lambda$  as $\e\to0$.  Dividing \eqref{s11-16-01} by $|x_\e-x_0|$ and passing to the limit, we find that all critical points $x_\varepsilon$ of $u_\e$ can be written as
\begin{equation*} 
x_\varepsilon=x_0+
\sqrt{-\frac{u_0(x_0)+o(1)}\la}\frac{1}{\sqrt{|\log\e|}}v,
\end{equation*}
where $\lambda=\lambda_1$ or $\lambda_2$,  $\lambda_1$ and $\lambda_2$ are the  eigenvalues of the matrix $D^2u_0(x_0)$ and $v$ an associated
eigenfunction with $|v|=1$.

Now we devote to prove that 
the maximum point $x_\e$ of $u_\e(x)$ on $\Omega_\e$ can be written as
\begin{equation*} 
x_\varepsilon=x_0+
\sqrt{-\frac{u_0(x_0)+o(1)}\la}\frac{1}{\sqrt{|\log\e|}}v, ~\mbox{with}~\lambda=\max\{\lambda_1,\lambda_2\}.
\end{equation*}
And if $\lambda_1=\lambda_2$, then above result holds automatically. 
Now let 
 \begin{equation}\label{ll1}
 x_\varepsilon=x_0+
\sqrt{-\frac{u_0(x_0)+o(1)}{\la_1}}\frac{1}{\sqrt{|\log\e|}}v_1\,\,~\mbox{with}~D^2u_0(x_0)v_1=\lambda_1v_1.
\end{equation}
Then from \eqref{s11-15-01a}, we know
 \begin{equation}\label{ll2}
\begin{split}
\frac{\partial^2 u_\e(x_\e)}{\partial x_i\partial x_j}=&
\frac{\partial^2 u_0(x_0)}{\partial x_i\partial x_j}
-\lambda_1 \Big(\delta_{ij}-v_{1i}v_{1j} \Big)
+o\big(1\big).
\end{split}\end{equation}
Next we take $v_2$ satisfying $D^2u_0(x_0)v_2=\lambda_2v_2$  with $|v_2|=1$ and $v_1\bot v_2$.
And then denoting
 $\textbf{P}=\left(
 \begin{array}{cc}
 v_{11} & v_{21} \\
  v_{12} & v_{22} \\
  \end{array}
  \right)$, we have
\begin{equation}\label{ll3}
\begin{split}
\textbf{P}^{-1}D^2u_\e(x_\e)\textbf{P}
=& \textbf{P}^{-1}D^2u_0(x_0)\textbf{P}-\Big(\lambda_1+o(1)\Big)\textbf{E}
+\lambda_1\textbf{P}^{-1}v_1v_1^T\textbf{P}.
\end{split}\end{equation}
where $\textbf{E}$ is the unit matrix. Also we compute that
\begin{equation}\label{ll4}
\begin{split}
\textbf{P}^{-1}v_1v_1^T\textbf{P}=&
\left(
 \begin{array}{cc}
 v_{11} & v_{12} \\
  v_{21} & v_{22} \\
  \end{array}
  \right)   \left(
              \begin{array}{c}
                v_{11} \\
                v_{12}\\
              \end{array}
            \right)
\left(
  \begin{array}{cc}
     v_{11} & v_{12} \\
  \end{array}
\right)\left(
 \begin{array}{cc}
 v_{11} & v_{21} \\
  v_{12} & v_{22} \\
  \end{array}
  \right) \\=&   \left(
              \begin{array}{c}
                1 \\
                0\\
              \end{array}
            \right)
\left(
  \begin{array}{cc}
     1 & 0\\
  \end{array}
\right) =  \left(
 \begin{array}{cc}
 1 & 0 \\
0 & 0 \\
  \end{array}
  \right).
  \end{split}
\end{equation}
Hence from \eqref{ll1}, \eqref{ll2}, \eqref{ll3} and \eqref{ll4}, it follows
\begin{equation}\label{ll5}
\begin{split}
\textbf{P}^{-1}D^2u_\e(x_\e)\textbf{P}
=&   \left(
 \begin{array}{cc}
\lambda_1+o(1) & 0 \\
0 & \lambda_2-\lambda_1+o(1) \\
  \end{array}
  \right).
\end{split}\end{equation}
This gives us that 
\begin{equation*} 
\begin{split}
\mbox{det}~D^2u_\e(x_\e)=\Big(\lambda_1+o(1)\Big)\Big(\lambda_2-\lambda_1+o(1)\Big).
\end{split}\end{equation*}

If $\lambda_1<\lambda_2$, then $\mbox{det}~D^2u_\e(x_\e)<0$ and $x_\e$ is a saddle point of $u_\e(x)$. Hence in this case, $x_\e$ is not a maximum point of $u_\e(x)$. If $\lambda_1>\lambda_2$, then two eigenvalues of $D^2u_\e(x_\e)$ are negative and $x_\e$ is a maximum point of $u_\e(x)$. These complete the proof of Proposition \ref{Prop1-2}.
\end{proof}

\vskip 0.2cm

Now we are ready to prove Theorem \ref{th1.1}.

\vskip 0.2cm

\begin{proof}[\underline{{\textbf{Proof of Theorem \ref{th1.1}}}}] We divide into following two cases.
 \vskip 0.2cm

\noindent\textbf{Case 1: $x_0\neq y_0$.} First from Proposition \ref{Prop1-2a}, we know that
the maximum point $x_\e$ of $u_\e(x)$ on $\Omega_\e$
satisfies
$$x_\e \to y_0~~ \mbox{as}~~\e\to 0.$$
Also
we recall that all  eigenvalues of $D^2u_0(y_0)$ are negative. Hence by continuity,
\eqref{5-10-1} and Lemma \ref{alemma-1}, we find that
\begin{equation}\label{ll7}
\displaystyle\lim_{\e\to 0}\lambda_{\max}\big(D^2u_\e(x_\e)\big)=
\lambda_{\max}\big(D^2u_0(y_0)\big)
=\max\Big\{\lambda_1,\lambda_2\Big\}<0.
\end{equation}
 \vskip 0.1cm

\noindent\textbf{Case 2: $x_0=y_0$.}
Without loss of generality, we suppose that $\lambda_2\leq \lambda_1<0$  then from Proposition \ref{Prop1-2}, we know that
the maximum point of $u_\e(x)$ satisfies
 \begin{equation*}
 x_\varepsilon=x_0+
\sqrt{-\frac{u_0(x_0)+o(1)}{\la_1}}\frac{1}{\sqrt{|\log\e|}}v_1\,\,~\mbox{with}~D^2u_0(x_0)v_1=\lambda_1v_1.
\end{equation*} 
And then \eqref{ll5} gives us that
\begin{equation*}
 \lim_{\e\to 0}\lambda_{\max}\big(D^2u_\e(x_\e)\big)=
 \max\Big\{\lambda_1,\lambda_2-\lambda_1\Big\}
\begin{cases}
<0,~\mbox{for}~\lambda_2<\lambda_1,\\[1mm]
 =0, ~\mbox{for}~\lambda_2=\lambda_1.
 \end{cases}
\end{equation*}
Hence for general $\lambda_1,\lambda_2$, it holds
\begin{equation*}
 \lim_{\e\to 0}\lambda_{\max}\big(D^2u_\e(x_\e)\big)=\max\Big\{\lambda_1,\lambda_2,-|\lambda_2-\lambda_1|\Big\}
\begin{cases}
<0,~\mbox{for}~\lambda_2\neq \lambda_1,\\[1mm]
 =0, ~\mbox{for}~\lambda_2=\lambda_1,
 \end{cases}
\end{equation*}
which, together with \eqref{ll7},  completes the proof of Theorem \ref{th1.1}.
\end{proof}

\vskip 0.2cm

\noindent\textbf{Acknowledgments} ~Part of this work was done while Peng Luo  was visiting the Mathematics Department
of the University of Rome ``La Sapienza" whose members he would like to thank for their warm hospitality. Hua Chen was supported by NSFC grants (No. 11631011,11626251). Peng Luo was supported by NSFC grants (No.11701204,11831009).


\begin{thebibliography}{99}
{\footnotesize


\bibitem{BF1996} C. Bandle and M. Flucher. \emph{Harmonic radius and concentration of energy; hyperbolic
radius and Liouville's equations $\Delta U=e^{U}$ and $\Delta U=U^{(n+2)/(n-2)}$.} Siam Review,   38(2): 191--238, 1996.

\bibitem{B2018} T. Beck. \emph{Uniform level set estimates for ground state eigenfunctions.}
SIAM J. Math. Anal. 50: 4483--4502, 2018.

 \bibitem{BL1976} H. Brascamp and E. Lieb. \emph{On extensions of the Brunn-Minkowski and Pr$\acute{e}$kopa-Leindler
 theorems, including inequalities for log concave functions, and with an application to the
  diffusion equation.} J. Funct. Anal. 22(4): 366--389, 1976.
 \bibitem{CC1998} X. Cabr\'e and S. Chanillo. \emph{Stable solutions of semilinear elliptic problems in convex domains.} Selecta Math. (N.S.), 4(1):  1--10, 1998.
\bibitem{GNN1979} B. Gidas, W. Ni and L. Nirenberg. \emph{Symmetry and related properties via the maximum
principle.} Comm. Math. Phys., 68(3): 209--243, 1979.
\bibitem{GT1983}
D. Gilbarg and  N. Trudinger. \emph{Elliptic partial differential equations of second order.} Third edition. Springer-Verlag, Berlin, 1998.


 \bibitem{GG2019} F. Gladiali and M. Grossi. \emph{On the number of critical points of solutions of semilinear equations in $\R^2$.} arXiv:1907.09895.

 \bibitem{G2020} M. Grossi. \emph{A Morse lemma for degenerate critical points of solutions of nonlinear equations in $\R^2$.}  Adv. Nonlinear Stud., 20(1): 1--18, 2020.



 \bibitem{HNS2016} F. Hamel, N. Nadirashvili and Y. Sire. \emph{Convexity of level sets for elliptic problems in convex domains or convex rings: two counterexamples.} Amer. J. Math., 138(2): 499--527, 2016.
\bibitem{K1985} B. Kawohl. \emph{Rearrangements and convexity of level sets in PDE,} volume 1150 of Lecture Notes in Mathematics. Springer-Verlag, Berlin, 1985.


 \bibitem{LR2019}  E. Lundberg and K. Ramachandran. \emph{A note on the critical points of the torsion function.} preprint arXiv:1907.08376.
 \bibitem{M2016} R. Magnanini. \emph{An introduction to the study of critical
points of solutions of elliptic and parabolic equations.}
Rend. Istit. Mat. Univ. Trieste 48: 121--166, 2016.
\bibitem{ML71} L. Makar-Limanov. \emph{The solution of the Dirichlet problem for the equation $\Delta u=-1$ in a convex region.}
Mat. Zametki 9: 89--92, 1971.


\bibitem{Ste18}S. Steinerberger. \emph{Topological bounds for Fourier coefficients and applications to torsion.} J. Funct. Anal. 274: 1611--1630, 2018.
}
\end{thebibliography}
\end{document}